\theoremstyle{plain}
\def\ssum{\mathop{\sum\!\sum}}
\newcommand{\sumstar}{\sideset{}{^{*}}\sum}
\def\rb{\mathbb{R}}
\def\nb{\mathbb{N}}
\def\zb{\mathbb{Z}}
\def\cb{{\mathbb C}}
\renewcommand{\mod}{\mathop{\rm{mod}}}
\numberwithin{equation}{section}
\newtheorem{theorem}{Theorem}[section]
\newtheorem{lemma}[theorem]{Lemma}
\newtheorem{proposition}[theorem]{Proposition}
\begin{document}

\title{The sum of multidimensional divisor function over values of quadratic polynomial}

\author{Nianhong Zhou}
\address{Department of Mathematics\\
East China Normal University\\
500 Dongchuan Road, Shanghai 200241, PR China}
\email{nianhongzhou@outlook.com}
\subjclass[2010]{Primary: 11P55, Secondary: 11L07, 11N37.}
\begin{abstract}
Let $F({\bf x})={\bf x}^tQ_m{\bf x}+\mathbf{b}^t{\bf x}+c\in\mathbb{Z}[{\bf x}]$ be a quadratic polynomial in $\ell (\ge 3 )$ variables ${\bf x} =(x_{1},...,x_{\ell})$, where $F({\bf x})$ is positive when ${\bf x}\in\mathbb{R}_{\ge 1}^{\ell}$, $Q_m\in {\rm M}_{\ell}(\mathbb{Z})$ is an $\ell\times\ell$ matrix  and its discriminant $\det\left(Q_m^t+Q_m\right)\neq 0$.
It gives an asymptotic formula for the following sum
\[
T_{k,F}(X)=\sum_{{\bf x}\in [1,X]^{\ell}\cap\mathbb{Z}^{\ell}}\tau_{k}\left(F({\bf x})\right)
\]
with the help of the circle method. Here $\tau_{k}(n)=\#\{(x_1,x_2,...,x_{k})\in\nb^{k}: n=x_1x_2...x_{k}\}$ with $k\in\zb_{\ge 2}$ is the multidimensional divisor function.
\end{abstract}
\keywords{Divisor function; quadratic polynomial; circle method.}
\maketitle


\section{Introduction}
The multidimensional divisor functions are generalisations of the divisor function $\tau(n)=\sum_{d|n}1$, defined by
\[
\tau_{k}(n)=\#\{(x_1,x_2,...,x_{k})\in\nb^{k}: n=x_1x_2...x_{k}\},
\]
and counting the number of ways that $n$ can be written as a product of $k$ positive integer numbers. Understanding the average order of $\tau_k(n)$, as it ranges over the values taken by polynomials is an important topic in analytic number theory.
The behavior of $\tau_k(n)$ is far less than perfectly understood even for $k=3$. For example, so far there are no
asymptotic formulas for the sum $\sum_{m\le x}\tau_{3}(m^{2}+1)$.  We considers the sum
\[\sum_{\left|F({\bf x})\right|\le x}\tau_k(\left|F({\bf x})\right|),\]
where $F({\bf x})\in \zb[x_1,x_2]$ is a binary form. For $k=2$ and $F({\bf x})$ is an irreducible cubic form, Greaves \cite{MR0263761} showed that there exists constants $c_0,c_1\in\rb$ with $c_0>0$ depending only on $F$, such that
\[
\sum_{\left|F({\bf x})\right|\le X}\tau(\left|F({\bf x})\right|)= c_0X^{\frac{2}{3}}\log X+c_1X^{\frac{2}{3}} +O_{\varepsilon, F}(X^{\frac{9}{14}+\varepsilon}),
\]
holds for any $\varepsilon>0$. If $F$ is an irreducible quartic form, Daniel \cite{MR1670278} showed that
\[
\sum_{\left|F({\bf x})\right|\le X}\tau(\left|F({\bf x})\right|)= c_2X^{\frac{1}{2}}\log X+O_{F}(X^{\frac{1}{2}}\log\log X),
\]
where $c_2$ is a constant depending only on $F$. For more related works, see e.g. la Bret{\`{e}}che, Browning \cite{MR2719554} and  Browning\cite{MR2861076}. However, if $k\ge 3$, this kind of problems will become more complicated. There are few results in this direction. For $\tau_3(n)$, Friedlander and Iwaniec \cite{MR2289206} showed that
\[
\sumstar_{n_1^2+n_2^6\leq x}\tau_3(n_1^2+n_2^6)=cx^{\frac{2}{3}}(\log x)^2
+O\left(x^{\frac{2}{3}}(\log x)^{\frac{7}{4}}(\log \log x)^{\frac{1}{2}}\right),
\]
where $c$ is a constant and $*$ means that $(n_1,n_2)=1$. If $F({\bf x})$ is a positive definite quadratic form with $\ell\ge 2$ variables, then it is easy to obtain the sum
\[\sum_{F({\bf x})\le X}\tau_k(F({\bf x}))=\sum_{m\le x}\tau_k(m)r_F(m)\]
by classical results of quadratic form, where $r_F(m)=\#\{{\bf x}\in\zb^{\ell}: m=F({\bf x})\}$. For example, Yu \cite{MR1754029} obtained the following
\[
\sum_{1\le m_1, m_2\le X}\tau\left(m_1^2+m_2^2\right)=c_4X^2\log X+c_5X^2+O_{\varepsilon}(X^{\frac{3}{2}+\varepsilon}),
\]
Sun and Zhang \cite{MR3515816} obtained the following
\[
\sum_{1\le m_1, m_2, m_3\le X}\tau_3\left(m_1^2+m_2^2+m_3^2\right)=c_6X^3\log^2 X+c_7X^3\log X+c_8X^3+O_{\varepsilon}(X^{\frac{11}{4}+\varepsilon}),
\]
where $c_4, c_5, c_6,c_7,c_8$ are constants and $\varepsilon$ is an arbitrarily positive number. However this method does not working if $F$ is an indefinite quadratic form. On the other hand, nothing of the following sum
\begin{equation}\label{T}
T_{k,F}(X):=\sum_{{\bf x}\in [1,X]^{\ell}\cap \zb^{\ell}}\tau_k(F({\bf x})),
\end{equation}
is known for $k\ge 4$ and quadratic form $F$ is positive definite or indefinite.\newline

The purpose of this paper is to investigate general problem as above. More precisely, let $F({\bf x})$ be a quadratic polynomial with $\ell(\ge3)$ variables $x_1, x_2,.., x_{\ell}$ and integer coefficients. The vector
${\bf x}=(x_1, x_2, . . . , x_{\ell})^{t}\in\mathbb{Z}^{\ell}$ and denote $B_{\ell}(X)=[1, X]^{\ell}\cap \zb^{\ell}$ as a box for some sufficiently large positive number $X$. Also assume quadratic polynomial $F({\bf x})$ satisfies
\begin{equation}\label{fdef1}
F({\bf x})={\bf x}^tQ_m{\bf x}+\mathbf{b}^t{\bf x}+c,
\end{equation}
where $Q_m\in {\rm M}_{\ell}(\mathbb{Z})$ is an $\ell\times\ell$ matrix with entries $a_{ij}$, vector $\mathbf{b}=(b_1,..., b_{\ell})^t\in\mathbb{Z}^{\ell}$, $c\in\zb $ and suppose those coefficients satisfy the following
\begin{align}\label{fdef2}
\begin{cases}
\min_{{\bf x}\in B_{\ell}(X)}F({\bf x})>0\\
\Delta_{F}=\det\left(Q_{m}^t+Q_m\right)\neq 0.
\end{cases}
\end{align}
Thus $F({\bf x})$ has a maximum value $N_F(X)$ in the box $B_{\ell}(X)$ when $X$ is sufficiently large, say
\begin{equation}\label{qmax1}
N_{F}(X)=X^2\sum_{1\le i, j\le \ell}a_{ij}+X\sum_{1\le r\le \ell}b_{r}+c.
\end{equation}
Our main result is the following.
\begin{theorem} Let $F$, $B(X)$ be defined as above, $k\ge 2$ and $\ell\ge 3$. For any $\varepsilon>0$ there exist constants $H_{k,0}(F)$, $H_{k,1}(F)$,..., and $H_{k,k-1}(F)$, such that
\[T_{k,F}(X)=\sum_{r=0}^{k-1}H_{k,r}(F)\int_{[1,X]^{\ell}}(\log F({\bf t}))^r{\rm d}{\bf t}+O_{\varepsilon,k,F}\left(X^{\ell-\frac{\ell-2}{\ell+2}\min\left(1,\frac{4}{k+1}\right)+\varepsilon}\right)\]
and
\[H_{k,r}(F)=\frac{1}{r!}\sum_{t=0}^{k-r-1}\frac{1}{t!}\left({\frac{{\rm d}^tL(s;k,F)}{{\rm d}s^t}}\bigg|_{s=1}\right) {\rm Res}\left((s-1)^{r+t}\zeta(s)^k; s=1\right),\]
where the function $L(s; k,F)$ is given in Lemma \ref{t41}.
\end{theorem}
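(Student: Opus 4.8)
The plan is to reformulate $T_{k,F}(X)$ as a weighted representation count and then run the circle method, using a hyperbola decomposition of $\tau_k$ to make the divisor sums tractable. To begin I would write
$$T_{k,F}(X)=\sum_{1\le m\le N_F(X)}\tau_k(m)\,r_F(m;X),\qquad r_F(m;X)=\#\{{\bf x}\in B_\ell(X):F({\bf x})=m\},$$
so that $m$ runs over $[1,N_F(X)]$ with $N_F(X)\asymp X^2$. Since $\tau_k$ is too large in mean to be inserted naively, I would first apply Dirichlet's hyperbola method, via the identity $\tau_k=\mathbf{1}*\tau_{k-1}$ iterated: this expresses $\tau_k(m)$ as a bounded signed combination of ``Type I'' sums $\sum_{d_1\cdots d_j\mid m,\ d_i\le D_i}\tau_{k-j}\big(m/(d_1\cdots d_j)\big)$ with the product $q=d_1\cdots d_j$ restricted to $q\le Q_k$, where $Q_k$ is a power of $X$ that grows with $k$ (roughly $X^{2(k-1)/k}$). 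Correspondingly $T_{k,F}(X)$ becomes a combination of sums $\sum_{q\le Q_k}g(q)\,N(q;X,\cdot)$, where $g$ is a divisor-type weight and $N(q;X,\cdot)=\#\{{\bf x}\in B_\ell(X):q\mid F({\bf x}),\ F({\bf x})/q\in I_q\}$ for a prescribed range $I_q$. For each such constrained count I would detect the congruence $q\mid F({\bf x})$ by additive characters to modulus $q$ and the size condition by an integral, so that the inner object is the quadratic exponential sum $S_q(a;\beta)=\sum_{{\bf x}\in B_\ell(X)}e\big(aF({\bf x})/q+\beta F({\bf x})\big)$ in $\ell$ variables.

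The analysis of $S_q(a;\beta)$ is where the hypotheses $\ell\ge3$ and $\Delta_F=\det(Q_m^t+Q_m)\ne0$ enter. Completing the square and applying Poisson summation over the box $B_\ell(X)$ turns $S_q(a;\beta)$ into $q^{-\ell}$ times a sum over a dual lattice of products of a complete quadratic Gauss sum $\mathcal G_q(a,{\bf h})=\sum_{{\bf u}\bmod q}e\big((aF({\bf u})+{\bf h}^t{\bf u})/q\big)$ and the Fourier transform of the box indicator; the multiplicativity and prime-power evaluation of Gauss sums of nondegenerate quadratic forms give $|\mathcal G_q(a,{\bf h})|\ll_\varepsilon q^{\ell/2+\varepsilon}$ uniformly, the $\varepsilon$ absorbing the prime $2$ and the primes dividing $\Delta_F$. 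The terms with $a=0$ and ${\bf h}=0$ contribute the expected density $X^\ell\rho_F(q)/q^\ell$; after completing the truncated singular series and restoring the size condition, the surviving main terms over $q$ reassemble, by Perron's formula, into
$${\rm Res}_{s=1}\Big(F({\bf t})^{\,s-1}\,\zeta(s)^k\,L(s;k,F)\Big)\quad\text{integrated over}\quad {\bf t}\in[1,X]^\ell,$$
where $\sum_{m\ge1}\tau_k(m)\mathfrak S_F(m)m^{-s}=\zeta(s)^kL(s;k,F)$ and $\mathfrak S_F$ is the singular series, with $L(s;k,F)$ the Euler product provided by Lemma \ref{t41}, holomorphic and of moderate growth for $\Re s\ge 1-\delta$. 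Expanding $F({\bf t})^{\,s-1}=\sum_{r\ge0}\frac{(s-1)^r}{r!}(\log F({\bf t}))^r$ and $L(s;k,F)=\sum_{t\ge0}\frac{(s-1)^t}{t!}\frac{{\rm d}^tL(s;k,F)}{{\rm d}s^t}\big|_{s=1}$ against the Laurent expansion of $\zeta(s)^k$ at $s=1$ then yields precisely the stated values of $H_{k,r}(F)$; this last step is a formal residue calculation.

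The error term is the crux. It collects the contribution of the pairs with $a\ne0$ or ${\bf h}\ne0$ (the genuine minor-arc part), the errors in truncating and completing the singular series and restoring the size condition, and the discrepancy between $B_\ell(X)$ and its Fourier model. The central difficulty is that these errors cannot be summed over $q$ term by term: already the smooth box error of size $X^{\ell-1+\varepsilon}$ per modulus, summed over $q\le Q_k\approx X^{2(k-1)/k}$ against the divisor weight, matches or exceeds the main term $\asymp X^{\ell}(\log X)^{k-1}$, and for $q$ exceeding $X$ (which occurs once $k\ge3$) the count $N(q;X,\cdot)$ is governed by the circle method for $F({\bf x})=m$, for which $\ell=3,4$ admits no direct power saving. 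One must therefore keep the cancellation present in the Gauss sums — and, for the $\tau_{k-j}$ factor still present, in the remaining divisor sum, i.e. the oscillation in the minor-arc exponential sum as a whole — and estimate the full minor-arc contribution at once, typically by Cauchy--Schwarz against a mean value of the quadratic exponential sum together with the Weyl-type bound afforded by $\ell\ge3$. This produces a power saving of $X^{(\ell-2)/(\ell+2)}$ off the trivial size $X^\ell$ provided the modulus $q$ stays below a threshold that is a power of $X$ depending on $\ell$; since the hyperbola method forces $q$ up to $X^{2(k-1)/k}$, the length of the ``major-arc'' range must be contracted, and optimising over this length is what yields the factor $\min\!\big(1,\tfrac{4}{k+1}\big)$ multiplying $\tfrac{\ell-2}{\ell+2}$ in the exponent. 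I expect essentially the entire technical weight of the proof to lie here; by contrast, the main-term extraction is a formal consequence of Lemma \ref{t41} and the residue calculus above.
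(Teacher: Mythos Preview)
Your route diverges from the paper's in a structural way, and the divergence is precisely where you locate ``essentially the entire technical weight of the proof''.

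The paper does \emph{not} open $\tau_k$ by the hyperbola method. It writes
\[
T_{k,F}(X)=\int_0^1 I_F(\alpha,X)\,J_k(-\alpha,N_F(X))\,{\rm d}\alpha,
\qquad
J_k(\alpha,Y)=\sum_{m\le Y}\tau_k(m)e(m\alpha),
\]
and treats $J_k$ as a single generating function. On the major arcs the input is Smith's asymptotic for $\sum_{m\le x,\,m\equiv h(q)}\tau_k(m)$ (Lemma~\ref{t21}), which after summing against additive characters gives an expansion of $J_k(a/q+\beta,Y)$ with error $O\big(q(1+|\beta|Y)Y^{1-2/(k+1)+\varepsilon}\big)$ valid for $q\le Y^{2/(k+1)}$ (Lemma~\ref{t24}); this is the source of the exponent $4/(k+1)$, since $Y=N_F(X)\asymp X^2$. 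The quadratic sum $I_F$ is handled on the major arcs by Lemma~\ref{t25} and on the minor arcs by a Weyl-differencing bound (Lemma~\ref{t28}). The minor arcs are dispatched in one stroke by Cauchy--Schwarz:
\[
\int_{\mathfrak m}|I_F J_k|\le \Big(\sup_{\mathfrak m}|I_F|\Big)\,|\mathfrak m|^{1/2}\Big(\int_0^1|J_k|^2\Big)^{1/2},
\qquad
\int_0^1|J_k|^2=\sum_{m\le N_F(X)}\tau_k(m)^2\ll X^{2+\varepsilon},
\]
together with a dyadic (Pleasants) slicing of $\mathfrak m$ so that the Weyl bound $I_F\ll X^{\ell}q^{-\ell/2}+X^{\ell/2+\varepsilon}$ is applied at the right scale. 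Balancing the major-arc error $Q^2X^{\ell-4/(k+1)+\varepsilon}$ against the minor-arc error $X^{\ell+\varepsilon}Q^{-(\ell-2)/2}$ gives $Q=X^{\min(1,4/(k+1))/(\ell+2)}$ and the stated exponent.

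By contrast, your hyperbola decomposition forces moduli up to $Q_k\approx X^{2(k-1)/k}$, which for $k\ge 3$ exceeds $X$; you correctly flag that the resulting per-modulus errors cannot be summed, but your resolution (``keep the cancellation\dots typically by Cauchy--Schwarz against a mean value\dots'') is not an argument, and no mechanism is given that turns $Q_k\approx X^{2(k-1)/k}$ into the exponent $\min(1,4/(k+1))$. In the paper the $4/(k+1)$ is not an optimisation artefact of the hyperbola cutoff but the range of validity of Smith's theorem. So either you must import an equally strong input on $\tau_k$ in progressions---at which point the hyperbola step is superfluous---or you must produce genuinely new cancellation over the large moduli, which your proposal does not supply. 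Your main-term extraction via the residue of $\zeta(s)^k L(s;k,F)\,F({\bf t})^{s-1}$ is correct and matches the paper.
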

\subsection*{Notation}
The symbols  $\mathbb{N}$, $\mathbb{Z}$ and $\mathbb{R}$ denote the positive integers, the integers and the real numbers, respectively. $e(z)=e^{2\pi i z}$, the letter $p$ always denotes a prime, $M^t$ is transpose operation of matrix $M$. The symbol $\zb_q$
represents shorthand for the groups $\zb/q\zb$. Also, the shorthand for the multiplicative
group reduced residue classes $(\zb/q\zb)^*$ is $\zb_q^*$. Occasionally we make use of the $\varepsilon$-convention: whenever $\varepsilon$ appears in a statement, it is asserted that the statement is true for all real $\varepsilon $. This allows us to write $x^{\varepsilon}\log x\ll x^{\varepsilon}$ , for example.
\section{Primaries}
The primary technique used in the proof of the main theorem is the circle method. We shall need the following results which need in the sequel. Lemma \ref{t24}, Lemma \ref{t25} and Lemma \ref{t26} will be used in the estimates of the major arcs of the circle method. Lemma \ref{t28} will be used in the estimate of the minor arcs. To obtain Lemma \ref{t24}, we firstly need the follows lemma.
\begin{lemma} [R. A. Smith] \label{t21}Let $1\le h\le q$, $(q,h)=\delta$. Then for $q\le x^{\frac{2}{k+1}}$ , we have
\[
\sum_{\substack{m\le x\\ m\equiv h~(\mod q)}}\tau_{k}(m)=M_{k}(x;h,q)+O_k(x^{1-\frac{2}{k+1}+\varepsilon}),
\]
where
\[
M_{k}(x;h,q)={\rm Res}\left(\zeta(s)^{k}\frac{x^{s}}{s}f_{k}(q, \delta,s); s=1\right)
\]
with
\begin{equation}\label{fkqdef}
f_{k}(q,\delta,s)=\frac{1}{\varphi(q/\delta)\delta^s}\prod_{p|(q/\delta)}\left(1-\frac{1}{p^s}\right)^k\sum_{\substack{d_1d_2...d_{k}=\delta\\ d_1,d_2,...,d_{k}>0}}\prod_{i=1}^{k-1}\prod_{\substack{p|(\prod_{r=i+1}^{k}d_r)\\ (p,q/\delta)=1}}\left(1-\frac{1}{p^s}\right).
\end{equation}
\end{lemma}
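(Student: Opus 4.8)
\textbf{Proof proposal for Lemma \ref{t21} (R. A. Smith's estimate).}

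The plan is to open up $\tau_k$ by iterated hyperbola, write the congruence condition $m\equiv h\pmod q$ in terms of the variables $d_1,\dots,d_k$ with $d_1d_2\cdots d_k=m$, and reduce everything to counting in progressions with a flexible main term. First I would split $\delta=(q,h)$ off: if $d_1\cdots d_k\equiv h\pmod q$ then necessarily $\delta\mid d_1\cdots d_k$, and I sort the $d_i$ by which part of $\delta$ they absorb, writing each $d_i=e_i d_i'$ with $e_1\cdots e_k=\delta$, $(d_i',q/\delta)$ unrestricted, and the remaining congruence becoming $d_1'\cdots d_k'\equiv h'\pmod{q/\delta}$ with $(h',q/\delta)=1$. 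This is exactly the combinatorial bookkeeping that produces the sum over $d_1d_2\cdots d_k=\delta$ and the nested products over primes $p\mid\prod_{r>i}d_r$ with $(p,q/\delta)=1$ in \eqref{fkqdef}.

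Next I would handle the resulting sum $\sum_{d_1'\cdots d_k'\le x/\delta,\ d_1'\cdots d_k'\equiv h'\ (q/\delta)}1$ by peeling off variables one at a time: fix $d_1',\dots,d_{k-1}'$ in suitable dyadic ranges $d_i'\sim D_i$, so that the innermost variable $d_k'$ runs over a progression to modulus $q/\delta$ of length $\asymp (x/\delta)/(D_1\cdots D_{k-1})$, contributing a main term $\frac{1}{\varphi(q/\delta)}\cdot(\text{length})\cdot(\text{density corrections at }p\mid q/\delta)$ plus an error $O(1)$ — here the coprimality of $h'$ to $q/\delta$ is essential so the progression is nonempty and the count is $\asymp \text{length}/\varphi(q/\delta)$. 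Summing the $O(1)$ errors over the outer $d_1',\dots,d_{k-1}'$ (each in a range of length $\le x$) gives $O_k(x^{1-\frac{2}{k+1}+\varepsilon})$ precisely when the outer product is constrained by $D_1\cdots D_{k-1}\le x/(q/\delta)$ and $q\le x^{2/(k+1)}$: the worst case is all $D_i$ roughly equal, of size $(x/q)^{1/(k-1)}$ up to the point where $D_k$ is comparable, i.e. each of order $x^{1/k}$, and counting lattice points there with $O(1)$ slack per configuration yields the stated exponent after optimizing the ranges where one switches between ``main term'' and ``trivial'' treatment of each variable. The main term, reassembled over all dyadic decompositions and all ways of distributing $\delta$, telescopes back into a Dirichlet-series residue: $\sum_{m\le x,\ m\equiv h\ (q)}\tau_k(m)$ has generating Dirichlet series $\sum_{n\equiv h\ (q)}\tau_k(n)n^{-s} = \frac{1}{q}\sum_{\chi\bmod q}\bar\chi(h)L(s,\chi)^k$ in the relevant range, whose only pole in $\Re s>1-\eta$ comes from the principal character, giving $\zeta(s)^k\prod_{p\mid q}(1-p^{-s})^k$ times the elementary factor recording the $\delta$-splitting; extracting the residue of $\zeta(s)^k f_k(q,\delta,s)x^s/s$ at $s=1$ is then just Perron/contour shifting, and this is how one checks that the explicitly built main term equals $M_k(x;h,q)$.

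The main obstacle is the uniformity in $q$: one must carry the error term $O_k(x^{1-\frac{2}{k+1}+\varepsilon})$ with \emph{no} dependence on $q$ throughout the range $q\le x^{2/(k+1)}$, which forces a careful choice of which of the $k$ variables to treat by the progression main term and which to bound trivially — the threshold $2/(k+1)$ is exactly the balance point where the trivial bound on a single short variable ($\ll x^{1/k+\varepsilon}$ choices, wait, more precisely the cumulative $O(1)$ errors) matches the main-term savings, and for larger $q$ the lemma simply fails. A secondary technical point is verifying that the elementary multiplicative factor assembled from the $\delta$-distribution really is \eqref{fkqdef}: this is a finite local computation at each prime $p\mid\delta$ versus $p\mid(q/\delta)$ versus $p\nmid q$, and amounts to checking that the Euler product of $\sum_{\delta\mid n,\ n/\delta\ \text{coprime to }q/\delta\ \text{part}}\tau_k$-type sums factors as claimed; I would do this prime-by-prime by comparing both sides' local zeta factors. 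Everything else — dyadic subdivision, counting integer points in boxes intersected with progressions, summing geometric series of errors, and the final Perron inversion — is routine, so the write-up would front-load the combinatorial setup and the range optimization and then invoke standard lemmas for the rest.
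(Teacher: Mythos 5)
Your reconstruction of the main term is fine in spirit (sorting the divisors by how they share $\delta=(q,h)$, reducing to a coprime congruence mod $q/\delta$, and recognizing the resulting local factor as \eqref{fkqdef} is essentially the algebraic part of the paper's argument, which massages Smith's main term $P_k$ into the residue of $\zeta(s)^k f_k(q,\delta,s)x^s/s$). The genuine gap is in the error term. The exponent $1-\frac{2}{k+1}$ uniformly for $q\le x^{2/(k+1)}$ is precisely the deep content of Smith's theorem, and it cannot be reached by the elementary ``peel off one variable and count it in a progression with error $O(1)$'' scheme you propose. In that scheme the outer variables $d_1',\dots,d_{k-1}'$ range over roughly $x^{(k-1)/k}$ tuples in the worst case (all factors of size about $x^{1/k}$), each contributing an error $O(1)$, so the best you can get is $O_k(x^{1-\frac{1}{k}+\varepsilon})$, and no choice of where to ``switch between main-term and trivial treatment'' repairs this: already for $k=2$ your method yields $O(x^{1/2+\varepsilon})$, whereas the lemma asserts $O(x^{1/3+\varepsilon})$ uniformly up to $q\le x^{2/3}$, a classical result that requires Weil's bound for Kloosterman sums, not the hyperbola method. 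Your claim that $2/(k+1)$ is ``exactly the balance point'' of the elementary ranges is therefore not correct; the elementary balance point sits at a strictly weaker exponent.

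What is actually needed, and what the paper does, is to quote Smith's analytic machinery: the identity (his equation (30)) reducing the general residue class to the coprime case, his Theorem 3, which is proved by a Voronoi-type summation/functional-equation analysis and produces the error $x^{\frac{k-1}{k+1}+\varepsilon}=x^{1-\frac{2}{k+1}+\varepsilon}$ together with a secondary term $D_k(0;\overline{h},q)$, and Matsumoto's theorem $D_k(0;h,q)\ll q^{\frac{k-1}{2}+\varepsilon}$ for $(h,q)=1$ (resting on hyper-Kloosterman sum estimates), which for $q\le x^{2/(k+1)}$ is absorbed into the same error. The Perron/contour-shift route you sketch as an alternative for identifying the main term also does not supply the error term with this uniformity in $q$ without serious input on $L$-functions. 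So as written the proposal proves a weaker lemma (error $x^{1-1/k+\varepsilon}$), which would propagate into a weaker exponent in the main theorem; to prove the statement as claimed you must either invoke Smith and Matsumoto, as the paper does, or reproduce their exponential-sum analysis.
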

\begin{proof} This lemma is essentially made by Smith \cite{MR664379}, and we just change the form as needed. Firstly, by the equation (30) of \cite{MR664379}, we get
\[
A_k(x;h,q)=\sum_{\substack{d_1d_2...d_{k}=\delta\\ d_1,d_2,...,d_{\ell}>0}}\sum_{\substack{t_i|\prod_{r={i+1}}^{k}d_{r}\\ i=1,2,...,k\\(t_1t_2...t_{k},q/\delta)=1}}\mu({\bf t})A_k\left(\frac{x}{\delta t_1t_2...t_{k}};\overline{t_1t_2...t_kh/\delta},q/\delta\right)\\
\]
where the notations be followed. Theorem 3 of this paper yields
\[A_k(x;h,q)=M_{k}(x;h,q)+\Delta_k(x;h,q),\]
where
\[
M_{k}(x;h,q)=\sum_{\substack{d_1d_2...d_{k}=\delta\\ d_1,d_2,...,d_{k}>0}}\sum_{\substack{t_i|\prod_{r={i+1}}^{k}d_{r}\\ i=1,2,...,k\\(t_1t_2...t_{k},q/\delta)=1}}\mu({\bf t})\frac{x}{\delta t_1t_2...t_{k}}P_{k}\left(\log\left(\frac{x}{\delta t_1t_2...t_{k}}\right),\frac{q}{\delta}\right)
\]
and
\[
\Delta_k(x;h,q)=\sum_{\substack{d_1...d_{k}=\delta\\ d_1,...,d_{k}>0}}\sum_{\substack{t_i|\prod_{r={i+1}}^{k}d_{r}\\ i=1,2,...,k\\(t_1...t_{k},q/\delta)=1}}\mu({\bf t})\left(D_k\left(0;\overline{\left(\frac{t_1...t_kh}{\delta}\right)},\frac{q}{\delta}\right)+O\left(\frac{\tau_{k}\left({q}/{\delta}\right)x^{\frac{k-1}{k+1}}\log^{k}x}{(\delta t_1...t_{k})^{\frac{k-1}{k+1}}\log x}\right)\right).
\]
By the definition of $P_k(\log x, q)$, namely (13),(21) and relatively talking about (21)  of \cite{MR664379}. It is easily seen that
\[
xP_k(\log x, q)=\frac{1}{\varphi(q)}{\rm Res}\left(\left(\zeta(s)\sum_{d|q}d^{-s}\mu(d)\right)^{k}\frac{x^{s}}{s}; s=1\right).
\]
Hence we obtain that
\begin{align*}
M_{k}(x;h,q)&=\sum_{\substack{d_1...d_{k}=\delta\\ d_1,...,d_{k}>0}}\sum_{\substack{t_i|\prod_{r={i+1}}^{k}d_{r}\\ i=1,2,...,k\\(t_1...t_{k},q/\delta)=1}}\frac{\mu({\bf t})}{\varphi(q/\delta)}{\rm Res}\left(\left(\zeta(s)\sum_{d|(q/\delta)}\frac{\mu(d)}{d^s}\right)^{k}\frac{x^{s}}{s}\frac{1}{\left(\delta t_1...t_{k}\right)^{s}}; s=1\right)\\
&={\rm Res}\left(\frac{x^{s}}{s}\frac{\zeta(s)^{k}}{\varphi(q/\delta)\delta^s }\prod_{p|(q/\delta)}\left(1-\frac{1}{p^s}\right)^k\sum_{\substack{d_1d_2...d_{k}=\delta\\ d_1,d_2,...,d_{k}>0}}\sum_{\substack{t_i|\prod_{r={i+1}}^{k}d_{r}\\ i=1,2,...,k\\(t_1t_2...t_{k},q/\delta)=1}}\frac{\mu({\bf t})}{\left( t_1...t_{k}\right)^{s}}; s=1\right)\\
&:={\rm Res}\left(\zeta(s)^{k}\frac{x^{s}}{s}f_{k}(q,\delta,s); s=1\right),
\end{align*}
where
\begin{align*}
f_{k}(q,\delta,s)&=\frac{1}{\varphi(q/\delta)\delta^s}\prod_{p|(q/\delta)}\left(1-\frac{1}{p^s}\right)^k\sum_{\substack{d_1d_2...d_{k}=\delta\\ d_1,d_2,...,d_{k}>0}}\sum_{\substack{t_i|\prod_{r={i+1}}^{\ell}d_{r}\\ i=1,2,...,\ell\\(t_1t_2...t_{k},q/\delta)=1}}\frac{\mu({\bf t})}{\left(t_1...t_{k}\right)^{s}}\\
&=\frac{1}{\varphi(q/\delta)\delta^s}\prod_{p|(q/\delta)}\left(1-\frac{1}{p^s}\right)^k\sum_{\substack{d_1d_2...d_{k}=\delta\\ d_1,d_2,...,d_{k}>0}}\prod_{i=1}^{k-1}\prod_{\substack{p|\prod_{r=i+1}^{k}d_r\\ (p,q/\delta)=1}}\left(1-\frac{1}{p^s}\right).
\end{align*}
Smith \cite{MR664379} conjectured the validity of the estimate $D(0,h,q)\ll q^{\frac{k-1}{2}+\varepsilon}$ for any $(q,h)=1$ and proved by Matsumoto \cite{MR792769}. Which implies the bound
\begin{align*}
\Delta_k(x;h,q)&\ll\sum_{\substack{d_1...d_{k}=\delta\\ d_1,...,d_{k}>0}}\sum_{\substack{t_i|\prod_{r={i+1}}^{k}d_{r}\\ i=1,2,...,k}}\left|\mu({\bf t})\right|\left(\left({q}/{\delta}\right)^{\frac{k-1}{2}+\varepsilon}+q^{\varepsilon}x^{\frac{k-1}{k+1}+\varepsilon}\right)\\
&\ll_k \left(q^{\frac{k-1}{2}+\varepsilon}+x^{\frac{k-1}{k+1}+\varepsilon}\right)\sum_{d_1...d_{k}=\delta}\tau(\delta)^{k-1}\ll_{k} x^{1-\frac{2}{k+1}+\varepsilon}.
\end{align*}
This completes the proof of the lemma.
\end{proof}

We have the proposition which will be used in the proof of Lemma \ref{t24}.
\begin{proposition}\label{t22} Let $q\ge 1$ be an integer, $(a,q)=1$ and denote $\delta=(h,q)$. Also let $f(q,\delta, s)$ be defined as in Lemma \ref{t21}. Define
\[F_{k,a}(q,s)=\sum_{h\in\zb_q}e\left(-\frac{ah}{q}\right)f_k(q,\delta,s).\]
Then $F_{k,a}(q,s)$ is independent of $a$ and we may write it as $F_k(q,s)$.
Furthermore, $F_k(q,s)$ is multiplicative function and
\[\frac{{\rm d}^s F_k(q,1)}{{\rm d}s^r}\ll_{k} q^{-1+\varepsilon}\]
holds for any integer $r=0,1,...,k-1$.
\end{proposition}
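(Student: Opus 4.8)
The plan is to establish the three assertions of the proposition in turn, each time reducing matters to the explicit shape of $f_k(q,\delta,s)$ recorded in \eqref{fkqdef}. First, for the independence of $a$, I would group the $h\in\zb_q$ according to the value of $\delta=(h,q)$: writing $h=\delta h'$ with $h'$ ranging over the reduced residues modulo $q/\delta$, and using that $f_k(q,\delta,s)$ depends on $h$ only through $\delta$, one obtains
\[
F_{k,a}(q,s)=\sum_{\delta\mid q}f_k(q,\delta,s)\sum_{\substack{h'\bmod q/\delta\\ (h',q/\delta)=1}}e\!\left(-\frac{ah'}{q/\delta}\right)=\sum_{\delta\mid q}f_k(q,\delta,s)\,c_{q/\delta}(a),
\]
where $c_n(a)=\sum_{d\mid(n,a)}\mu(n/d)d$ is the Ramanujan sum. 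Since $(a,q)=1$ forces $(a,q/\delta)=1$ for every $\delta\mid q$, one has $c_{q/\delta}(a)=\mu(q/\delta)$, which is independent of $a$; hence $F_{k,a}(q,s)=F_k(q,s):=\sum_{\delta\mid q}\mu(q/\delta)f_k(q,\delta,s)$.

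Next, for multiplicativity, let $(q_1,q_2)=1$. Every divisor of $q_1q_2$ factors uniquely as $\delta=\delta_1\delta_2$ with $\delta_i\mid q_i$, and $\mu(q_1q_2/\delta)=\mu(q_1/\delta_1)\mu(q_2/\delta_2)$, so it suffices to check that $f_k(q_1q_2,\delta_1\delta_2,s)=f_k(q_1,\delta_1,s)f_k(q_2,\delta_2,s)$. In \eqref{fkqdef} the prefactor $\varphi(q/\delta)^{-1}\delta^{-s}$ and the product $\prod_{p\mid(q/\delta)}(1-p^{-s})^k$ clearly factor over the coprime parts; for the combinatorial sum one splits each $d_j$ into its $q_1$-component and its $q_2$-component, observes that $d_1\cdots d_k=\delta_1\delta_2$ is equivalent to the two separate equations over $q_1$ and $q_2$, and notes that a prime $p\mid\prod_{r>i}d_r$ lying over $q_1$ automatically satisfies $(p,q_2/\delta_2)=1$, so the side condition $(p,q/\delta)=1$ decouples into $(p,q_1/\delta_1)=1$ and $(p,q_2/\delta_2)=1$. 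Hence the combinatorial sum factors as well, and $F_k(q,s)$ is multiplicative.

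Then I would compute the local factors. For a prime power $p^a$ with $a\ge 1$, only $\delta\in\{p^a,p^{a-1}\}$ contribute to $\sum_{\delta\mid p^a}\mu(p^a/\delta)f_k(p^a,\delta,s)$, so $F_k(p^a,s)=f_k(p^a,p^a,s)-f_k(p^a,p^{a-1},s)$. When $\delta=p^{a-1}$ every $d_j$ is a power of $p$, so the requirement that the primes counted in the inner product be coprime to $q/\delta=p$ empties that product and leaves $f_k(p^a,p^{a-1},s)=\dfrac{(1-p^{-s})^k}{(p-1)\,p^{(a-1)s}}\binom{a+k-2}{k-1}$. When $\delta=p^a$ I would write $d_j=p^{e_j}$ with $e_1+\cdots+e_k=a$ and check that $\#\{\,i\le k-1:\ e_{i+1}+\cdots+e_k\ge 1\,\}=J-1$, where $J=\max\{j:\ e_j\ge 1\}$; collecting the compositions according to the value of $J$ gives $f_k(p^a,p^a,s)=p^{-as}\sum_{j=1}^{k}\binom{a+j-2}{j-1}(1-p^{-s})^{j-1}$. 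At $s=1$ the $j=k$ term of this sum cancels $f_k(p^a,p^{a-1},1)$ exactly (using $p(1-p^{-1})=p-1$), so that
\[
F_k(p^a,1)=p^{-a}\sum_{j=1}^{k-1}\binom{a+j-2}{j-1}\Bigl(1-\tfrac1p\Bigr)^{j-1},
\]
and more generally $F_k(p^a,s)=p^{-as}h_{p,a}(s)$, where $h_{p,a}(s)$ is a fixed polynomial combination of $(1-p^{-s})$ and $p^{s}$ whose value and derivatives at $s=1$ satisfy $h_{p,a}^{(i)}(1)\ll_{k,i}(\log p)^{i}a^{\,k-1}$ uniformly in $p$ --- there is no blow-up in $p$ since at $s=1$ one has $1-p^{-s}\in[\tfrac12,1)$, $\,p^{-s}\in(0,\tfrac12]$, and $\tfrac{p-p^{s}}{p-1}\big|_{s=1}=0$.

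Finally, for the bound, write $F_k(q,s)=q^{-s}\prod_{p^a\|q}h_{p,a}(s)$ and apply the Leibniz rule to $\frac{{\rm d}^r}{{\rm d}s^r}F_k(q,s)\big|_{s=1}$ for $0\le r\le k-1$: a differentiation falling on $q^{-s}$ produces a factor $(\log q)^{\le r}q^{-1}$, each differentiated factor $h_{p,a}$ produces at most $(\log p)^{i_p}a^{\,k-1}$, and the undifferentiated factors contribute $\prod_{p^a\|q}h_{p,a}(1)\ll_k\prod_{p^a\|q}a^{\,k-1}\le\tau(q)^{k-1}\ll_k q^{\varepsilon}$. Since at most $r$ of the primes are differentiated, $\prod_p(\log p)^{i_p}\le(\log q)^{r}\ll_k q^{\varepsilon}$, and the number of terms in the Leibniz expansion (together with the multinomial coefficients) is $\ll_k \omega(q)^{k}\ll_k q^{\varepsilon}$; assembling these bounds gives $\frac{{\rm d}^r}{{\rm d}s^r}F_k(q,s)\big|_{s=1}\ll_k q^{-1+\varepsilon}$ for every $r\in\{0,1,\dots,k-1\}$, which is the claim. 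I expect the genuinely delicate points to be the combinatorial evaluation of the $\delta=p^a$ sum (the identification of the exponent of $1-p^{-s}$ through the position $J$ of the last nonzero $e_j$) and, in the final step, carrying enough uniformity through the Leibniz argument so that all of the $a$-, $\log p$- and $\omega(q)$-dependencies are absorbed into the $q^{\varepsilon}$.
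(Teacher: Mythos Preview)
Your first two steps---grouping by $\delta=(h,q)$ to rewrite the exponential sum as a Ramanujan sum and obtain $F_k(q,s)=\sum_{\delta\mid q}\mu(q/\delta)f_k(q,\delta,s)$, and then verifying multiplicativity by checking $f_k(q_1q_2,\delta_1\delta_2,s)=f_k(q_1,\delta_1,s)f_k(q_2,\delta_2,s)$ for $(q_1,q_2)=1$---are exactly the paper's argument.

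For the derivative bound the routes diverge. The paper never computes the local factors here: it bounds $f_k(q,q/\delta,s)$ crudely for $s=1+\rho e(\theta)$ with $\rho\in(0,1)$, sums over $\delta\mid q$ to get $F_k(q,s)\ll q^{-\min(\sigma,1)+\varepsilon}$, and then applies the Cauchy integral formula
\[
\frac{{\rm d}^rF_k(q,1)}{{\rm d}s^r}=\frac{r!}{2\pi i}\int_{|\xi-1|=\rho}\frac{F_k(q,\xi)}{(\xi-1)^{r+1}}\,{\rm d}\xi\ll \frac{r!}{\rho^r}q^{-(1-\rho)+\varepsilon},
\]
taking $\rho$ small. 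You instead compute $F_k(p^a,s)$ explicitly, write $F_k(q,s)=q^{-s}\prod_{p^a\|q}h_{p,a}(s)$, and control the derivatives via the Leibniz rule together with $h_{p,a}^{(i)}(1)\ll_k(\log p)^i a^{k-1}$. This is correct: the points you flag (that $p^s/(p-1)$ stays bounded at $s=1$, that $\prod_{p^a\|q}a^{k-1}\le\tau(q)^{k-1}\ll q^{\varepsilon}$, and that the number of Leibniz terms is $\ll_k\omega(q)^{r}\ll q^{\varepsilon}$ since $r\le k-1$) are exactly what is needed. Your local factor computation---identifying the exponent of $1-p^{-s}$ via the position of the last nonzero $e_j$---is in fact the computation the paper postpones to Lemma~\ref{t41}, so you have effectively merged the two proofs. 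The trade-off is that the paper's Cauchy argument is shorter and requires no local analysis, while your approach yields the explicit Euler factors as a by-product and avoids the complex-analytic detour.
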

\begin{proof}
First, we have
\begin{align*}
F_{k,a}(q,s)&=\sum_{\delta|q}\sum_{\substack{h\in\zb_q\\ (h,q)=\delta}}e\left(-\frac{ah}{q}\right)f_k(q,\delta,s)=\sum_{\delta|q}f_k(q,\delta,s)\sum_{h_1\in\zb_{q/\delta}^*}e\left(-\frac{ah_1}{q/\delta}\right)\\
&=\sum_{\delta|q}c_{\delta}(a)f_k(q,q/\delta,s)=\sum_{\delta|q}\mu(\delta)f_k(q,q/\delta,s),
\end{align*}
where $c_{\delta}(a)$ is the Ramanujan's sum and the fact that if $(a,\delta)=1$ then $c_{\delta}(a)=\mu(\delta)$ be used. This result yields $F_{k,a}(q,s)$ independent on $a$. Suppose that positive integers $q_1$ and $q_2$ are coprime, then
\begin{align*}
F_k(q_1,s)F_k(q_2,s)&=\sum_{\delta_2|q_2}\sum_{\delta_1|q_1}\mu(\delta_1)\mu(\delta_2)f_k(q_1,q_1/\delta_1,s)f_k(q_2,q_2/\delta_2,s)\\
&=\sum_{(\delta_1\delta_2)|(q_1q_2)}\mu(\delta_1\delta_2)f_k(q_1,q_1/\delta_1,s)f_k(q_2,q_2/\delta_2,s),
\end{align*}
hence we just need to show
\[f_k(q_1,q_1/\delta_1,s)f_k(q_2,q_2/\delta_2,s)=f_k(q_1q_2,q_1q_2/(\delta_1\delta_2),s)\]
whenever $\delta_1|q_1$ and $\delta_2|q_2$. It is obtained by the definition of $f_k(q,q/\delta,s)$, say
\[
f_k(q,q/\delta,s)=\frac{\delta^s}{\varphi(\delta)q^s}\prod_{p|\delta}\left(1-\frac{1}{p^s}\right)^k\sum_{\substack{d_1d_2...d_{k}=q/\delta\\ d_1,d_2,...,d_{k}>0}}\prod_{i=1}^{k-1}\prod_{\substack{p|(\prod_{r=i+1}^{k}d_r)\\ (p,\delta)=1}}\left(1-\frac{1}{p^s}\right).\]
Furthermore,
\[
f_k(q,q/\delta,s)\ll \frac{\delta^{\sigma}}{\varphi(\delta)q^{\sigma}}\prod_{p|\delta}\left(1+\frac{1}{p^{\sigma}}\right)^k\sum_{\substack{d_1d_2...d_{k}=q/\delta\\ d_1,d_2,...,d_{k}>0}}\prod_{i=1}^{k-1}\prod_{\substack{p|(\prod_{r=i+1}^{k}d_r)\\ (p,\delta)=1}}\left(1+\frac{1}{p^{\sigma}}\right),
\]
where $\sigma={\rm Re}(s)$. It is easily seen that if $s=1+\rho e(\theta)$ with $\theta\in[0,1)$, then
\[f_k(q,q/\delta,s)\ll \frac{\delta^{\sigma}}{\varphi(\delta)q^{\sigma}} 2^{k\omega(\delta)}\tau_k(q)2^{(k-1)\omega(q)}\ll q^{\varepsilon} \frac{\delta^{\sigma}}{\varphi(\delta)q^{\sigma}}.\]
Thus we have
\[F_{k}(q,s)\ll q^{\varepsilon}\sum_{\delta|q}\left|\mu(\delta)\right|\frac{\delta^{\sigma}}{\varphi(\delta)q^{\sigma}}= q^{-\sigma+\varepsilon}\prod_{p|q}\left(1+\frac{p^{\sigma}}{p-1}\right)\ll q^{-\sigma+\varepsilon}\prod_{p|q}\left(1+\frac{p^{\sigma}}{p}\right).\]
On the other hand
\begin{align*}
q^{-\sigma}\prod_{p|q}\left(1+\frac{p^{\sigma}}{p}\right)\ll \begin{cases}
q^{-\sigma+\varepsilon} \quad &\sigma\in(0,1]\\
q^{-\sigma+\varepsilon}\prod_{p|q}p^{-1+\sigma}\ll q^{-1+\varepsilon} & \sigma\in(1,2).
\end{cases}
\end{align*}
Therefore
\begin{equation}\label{pree}
F_k(q,s)\ll q^{-\min(\sigma, 1)+\varepsilon}.
\end{equation}
It is obviously that $F_{k}(q,s)$ is analytic in $\cb$ for every $q$ which concerned. Hence one can use Cauchy estimate, say
\begin{equation}\label{cauchye}
\frac{{\rm d}^rF_{k}(q,s)}{{\rm d}s^r}{\bigg|}_{s=1}=\frac{r!}{2\pi i}\int_{|\xi-1|=\rho}\frac{F_{k}(q,\xi)}{(\xi-1)^{r+1}}{\rm d}\xi\ll \frac{r!}{\rho^r}\max_{\theta\in[0, 1)}\left|F_{k}(q,1+\rho e(\theta))\right|,
\end{equation}
where $\rho\in (0,1)$.
Hence combining with (\ref{pree}), we obtain that
\[\frac{{\rm d}^s F_k(q,1)}{{\rm d}s^r}\ll \frac{r!}{\rho^r}q^{-(1-\rho)+\varepsilon}\ll_{k} q^{-1+\varepsilon}.\]
Thus complete the proof of the lemma.
\end{proof}

 To apply the circle method, we need the following propositions.
\begin{proposition}\label{t23} Let $\alpha=a/q+\beta$ with $q\le X^{\frac{2}{k+1}}$ be an positive integer and $(a,q)=1$. Define
\[J_{k}(\alpha, X)=\sum_{m\le X}\tau_k(m)e(m\alpha).\]
Then
\[
J_{k}(\alpha, X)=\int_{1}^{X}e(u\beta){\rm Res}\left(\zeta(s)^kF_k(q,s)u^{s-1};s=1\right){\rm d}u+O_k\left(q(1+|\beta|X)X^{1-\frac{2}{k+1}+\varepsilon}\right),
\]
where $F_k(q,s)$ defined as Proposition \ref{t22}.
\end{proposition}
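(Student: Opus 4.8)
The plan is to reduce $J_k(\alpha,X)$ to the residue-class sums controlled by Lemma~\ref{t21}. Since $e(m\alpha)=e(ma/q)e(m\beta)$ and $e(ma/q)$ depends only on $m\bmod q$, sorting $m$ into progressions modulo $q$ gives
\[
J_k(\alpha,X)=\sum_{h\bmod q}e\!\left(\frac{ah}{q}\right)\sum_{\substack{m\le X\\ m\equiv h\,(\mathrm{mod}\ q)}}\tau_k(m)e(m\beta).
\]
I would then apply Abel summation to the inner sum against the smooth weight $e(u\beta)$, expressing it through the partial sum $S_h(u)=\sum_{m\le u,\,m\equiv h}\tau_k(m)$: the inner sum equals $S_h(X)e(X\beta)-2\pi i\beta\int_1^X S_h(u)e(u\beta)\,\mathrm{d}u$. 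Writing $S_h(u)=M_k(u;h,q)+E_h(u)$ via Lemma~\ref{t21} splits $J_k(\alpha,X)$ into a main term (built from the $M_k(u;h,q)$) and an error term (built from the $E_h(u)$).

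For the main term, summing $M_k(u;h,q)=\mathrm{Res}\big(\zeta(s)^k\frac{u^s}{s}f_k(q,\delta,s);s=1\big)$ against $e(ah/q)$ collapses, by Proposition~\ref{t22} (independence of $a$ in $F_{k,a}$, applied with $-a$ in place of $a$), to $\widetilde M_k(u;q):=\mathrm{Res}\big(\zeta(s)^k\frac{u^s}{s}F_k(q,s);s=1\big)$. Differentiating this residue under the contour integral yields $\frac{\mathrm d}{\mathrm du}\widetilde M_k(u;q)=\mathrm{Res}\big(\zeta(s)^k u^{s-1}F_k(q,s);s=1\big)$, which is precisely the integrand appearing in the asserted formula. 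Hence undoing the Abel summation (integration by parts in the reverse direction) on $\widetilde M_k(X;q)e(X\beta)-2\pi i\beta\int_1^X\widetilde M_k(u;q)e(u\beta)\,\mathrm{d}u$ produces $\int_1^X e(u\beta)\,\mathrm{Res}\big(\zeta(s)^kF_k(q,s)u^{s-1};s=1\big)\,\mathrm{d}u$ together with the boundary term $\widetilde M_k(1;q)e(\beta)$; by the derivative bounds $\frac{\mathrm d^r F_k(q,1)}{\mathrm ds^r}\ll_k q^{-1+\varepsilon}$ from Proposition~\ref{t22}, one gets $\widetilde M_k(1;q)\ll_k q^{-1+\varepsilon}=O(1)$, so this boundary term is harmlessly absorbed into the error.

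For the error term the key is not to estimate each $E_h(u)$ separately (which would lose a factor $q$), but to recombine $\sum_h e(ah/q)E_h(u)=J_k(a/q,u)-\widetilde M_k(u;q)=:\mathcal E(u)$ and bound $\mathcal E(u)$ in one stroke. For $u\ge q^{(k+1)/2}$ Lemma~\ref{t21} applies in every class, giving $\mathcal E(u)\ll_k q\,u^{1-\frac{2}{k+1}+\varepsilon}$; for $1\le u<q^{(k+1)/2}$ the trivial bound $\sum_{m\le u}\tau_k(m)\ll u^{1+\varepsilon}$ together with $\widetilde M_k(u;q)\ll_k u^{1+\varepsilon}$ gives $\mathcal E(u)\ll_k u^{1+\varepsilon}$. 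Since $\sum_h e(ah/q)$ applied to the error contribution equals $\mathcal E(X)e(X\beta)-2\pi i\beta\int_1^X\mathcal E(u)e(u\beta)\,\mathrm{d}u$, splitting the integral at $q^{(k+1)/2}$ bounds the large range by $\ll_k q|\beta|X\cdot X^{1-\frac{2}{k+1}+\varepsilon}$ and the small range by $\ll |\beta|\big(q^{(k+1)/2}\big)^{2+\varepsilon}=|\beta|q^{k+1+\varepsilon}$; the elementary identity $q\le X^{\frac{2}{k+1}}\iff q^k\le X^{2-\frac{2}{k+1}}$ turns the latter into $\ll_k q|\beta|X\cdot X^{1-\frac{2}{k+1}+\varepsilon}$ as well, and $\mathcal E(X)\ll_k qX^{1-\frac{2}{k+1}+\varepsilon}$ handles the boundary. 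Altogether the error is $O_k\big(q(1+|\beta|X)X^{1-\frac{2}{k+1}+\varepsilon}\big)$, and combining with the main term gives the claim.

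The only real obstacle is bookkeeping on the range $1\le u<q^{(k+1)/2}$, where Lemma~\ref{t21} is unavailable: one has to verify that treating that piece by the trivial divisor bound still fits inside the target error, which works precisely because $q\le X^{2/(k+1)}$ is equivalent to $q^k\le X^{2-2/(k+1)}$. The rest is routine partial summation plus the already-established analytic properties of $F_k(q,s)$ from Proposition~\ref{t22}.
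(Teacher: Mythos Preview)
Your proposal is correct and follows essentially the same route as the paper: split into residue classes modulo $q$, apply partial summation (the paper writes it as a Stieltjes integral $\int_1^X e(u\beta)\,\mathrm{d}S_h(u)$, you as Abel summation), invoke Lemma~\ref{t21} to replace $S_h(u)$ by $M_k(u;h,q)$ plus an error, and then collapse the sum over $h$ of the $f_k(q,\delta,s)$ into $F_k(q,s)$ via Proposition~\ref{t22}. The paper's proof is terser---it simply writes the inner sum as $\int_1^X e(u\beta)\,\mathrm{d}\big(M_k(u;h,q)+O_k(u^{1-2/(k+1)+\varepsilon})\big)$ and passes to $\int_1^X e(u\beta)M'_k(u;h,q)\,\mathrm{d}u$ with the stated error---whereas you spell out the integration by parts and treat the range $u<q^{(k+1)/2}$ (where Lemma~\ref{t21} is not literally stated) separately.

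One small point: your remark that estimating each $E_h(u)$ separately ``would lose a factor $q$'' is slightly off. The target error already carries a factor $q$, and the paper obtains it precisely by bounding each $E_h(u)$ individually and summing over the $q$ residue classes; your recombination into $\mathcal{E}(u)$ does not actually save anything in the large-$u$ range. In the small-$u$ range the per-class bound $E_h(u)\ll (u/q+1)u^\varepsilon$ (at most $\lfloor u/q\rfloor+1$ terms, each $\ll u^\varepsilon$, and $M_k(u;h,q)\ll (u/q)u^\varepsilon$) also suffices, since $u/q\le q^{(k-1)/2}\le X^{1-2/(k+1)}$ under the hypothesis $q\le X^{2/(k+1)}$. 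So both the paper's direct per-class argument and your recombined $\mathcal{E}(u)$ argument work; neither is materially sharper than the other.
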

\begin{proof}
First, by Proposition \ref{t23} we have
\begin{align*}J_{k}(\alpha, X)&=\sum_{h\in\zb_q}e\left(\frac{ah}{q}\right)\sum_{\substack{m\le X\\ m\equiv h~(\mod q)}}\tau_k(m)e(m\beta)\\
&=\sum_{h\in\zb_q}e\left(\frac{ah}{q}\right)\int_{1}^Xe(u\beta){\rm d}\left(M_k(u;h,q)+O_k(u^{1-\frac{2}{k+1}+\varepsilon})\right)\\
&=\sum_{h\in\zb_q}e\left(\frac{ah}{q}\right)\int_{1}^Xe(u\beta)M'(u;h,q){\rm d}u+O_k\left(q(1+|\beta|X)X^{1-\frac{2}{k+1}+\varepsilon}\right).
\end{align*}
On the other hand,
\[\sum_{h\in\zb_q}e\left(\frac{ah}{q}\right)M'(u;h,q)=\sum_{h\in\zb_q}e\left(\frac{ah}{q}\right){\rm Res}\left(\zeta(s)^{k}u^{s-1}f_{k}(q,\delta,s); s=1\right),\]
where $\delta=(q,h)$ and using Proposition \ref{t22} we complete the proof of the lemma.
\end{proof}
The Riemann zeta function is meromorphic with a single pole of order one at $s = 1$. It can therefore be expanded as a Laurent series about $s = 1$, say
\[
\zeta(s)=\frac{1}{s-1}+\sum_{n=0}^{\infty}\frac{(-1)^{n}\gamma_n}{n!}(s-1)^n,
\]
where
\[\gamma_n=\lim_{M\rightarrow\infty}\left(\sum_{\ell=1}^{M}\frac{\log^{n}\ell}{\ell}-\frac{\log^{n+1}M}{n+1}\right),\;\; n\in\nb\]
are the Stieltjes constants. Therefore there exists constants $\alpha_{k,1}$, $\alpha_{k,2}$,...,$\alpha_{k,k}$ and a holomorphic function $h_k(s)$ on $\cb$ such that
\begin{equation}
\zeta(s)^{k}=\sum_{r=1}^{k}\frac{\alpha_{k,r}}{(s-1)^r}+h_k(s).
\end{equation}
Furthermore, we obtain that
\begin{equation}
\zeta(s)^{k}x^{s-1}=\sum_{r=1}^{k}\frac{1}{(s-1)^r}\sum_{r_1=0}^{k-r}\alpha_{k,r_1+r}\frac{\log^{r_1} x}{r_1!}+g_{k,x}(s),
\end{equation}
for any $x>0$, where $h_{k,x}(s)$ is a holomorphic function on $\cb$ about $s$. On the other hand, we also have a Taylor series for  $F_k(q,s)$ at $s=1$, say
\[F_k(q,s)=\sum_{\ell=0}^{\infty}\frac{F_k^{\langle \ell\rangle}(q,1)}{\ell !}(s-1)^{\ell}.\]
Therefore the residue of $\zeta(s)^{k}x^{s-1}F_k(q,s)$ at $s=1$ is
\begin{equation}
\sum_{\substack{r-\ell=1\\ \ell, r\in\nb,1\le r\le k}}\frac{F_k^{\langle \ell\rangle}(q,1)}{\ell !}\sum_{r_1=0}^{k-r}\alpha_{k,r_1+r}\frac{\log^{r_1} x}{r_1!}=\sum_{r=1}^{k}\frac{\log^{r-1}x}{(r-1)!}\sum_{t=0}^{k-r}F_k^{\langle t\rangle}(q,1)\frac{\alpha_{k,r+t}}{t!}.
\end{equation}
We Define
\[
\beta_{k,r}(q)=\frac{1}{r!}\sum_{t=0}^{k-r-1}\frac{\alpha_{k,r+1+t}}{t!}\left(\frac{{\rm d}^t F_k(q,s)}{{\rm d}s^t}\bigg|_{s=1}\right).
\]
Then by Proposition \ref{t22} we have $\beta_{k,r}(q)\ll q^{-1+\varepsilon}$ and the results of Proposition \ref{t23} rewritten as
\begin{lemma}\label{t24}Let $\alpha=a/q+\beta$ with $q\le X^{\frac{2}{k+1}}$ be an positive integer and $(a,q)=1$. Then, we have
\[
J_{k}(\alpha, X)=\sum_{r=0}^{k-1}\beta_{k,r}(q)\int_{1}^X(\log u)^re(u\beta){\rm d}u+O_k\left(q(1+|\beta|X)X^{1-\frac{2}{k+1}+\varepsilon}\right).
\]
where
\[\beta_{k,r}(q)=\frac{1}{r!}\sum_{t=0}^{k-r-1}\frac{\alpha_{k,r+1+t}}{t!}\left(\frac{{\rm d}^t F_k(q,s)}{{\rm d}s^t}\bigg|_{s=1}\right).\]
with
\[\alpha_{k,r}={\rm Res}\left((s-1)^{r-1}\zeta(s)^k; s=1\right)\;,\; F_k(q,s)=\sum_{\delta|q}\mu(q/\delta)f_k(q,\delta,s)\]
and where $f_k(q,\delta,s)$ defined by (\ref{fkqdef}).
\end{lemma}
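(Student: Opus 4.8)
The plan is to obtain Lemma \ref{t24} directly from Proposition \ref{t23} by evaluating, once and for all, the residue that occurs there. Recall that Proposition \ref{t23} already gives, for $\alpha=a/q+\beta$ with $(a,q)=1$ and $q\le X^{\frac{2}{k+1}}$,
\[
J_{k}(\alpha, X)=\int_{1}^{X}e(u\beta)\,{\rm Res}\left(\zeta(s)^{k}F_k(q,s)u^{s-1};s=1\right){\rm d}u+O_k\left(q(1+|\beta|X)X^{1-\frac{2}{k+1}+\varepsilon}\right),
\]
so the task reduces to making the residue explicit and pulling the resulting coefficients outside the $u$-integral; the error term is then inherited verbatim from Proposition \ref{t23}.

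First I would expand the three factors of the integrand about $s=1$: the Laurent expansion $\zeta(s)^{k}=\sum_{r=1}^{k}\alpha_{k,r}(s-1)^{-r}+h_k(s)$ with $h_k$ entire, the elementary expansion $u^{s-1}=\sum_{r_1\ge 0}\frac{(\log u)^{r_1}}{r_1!}(s-1)^{r_1}$, and the Taylor expansion of $F_k(q,s)$ about $s=1$, the last being legitimate since Proposition \ref{t22} tells us $F_k(q,\cdot)$ is entire. Multiplying the three series, the coefficient of $(s-1)^{-1}$ — that is, the residue — receives contributions only from the polar part of $\zeta(s)^{k}$ (the term containing $h_k(s)$ multiplies into an entire function and contributes nothing), and collecting the terms with exponent $-r+r_1+t=-1$ yields
\[
{\rm Res}\left(\zeta(s)^{k}F_k(q,s)u^{s-1};s=1\right)=\sum_{r_1\ge 0}\frac{(\log u)^{r_1}}{r_1!}\sum_{t\ge 0}\frac{\alpha_{k,r_1+t+1}}{t!}\left(\frac{{\rm d}^{t}F_k(q,s)}{{\rm d}s^{t}}\bigg|_{s=1}\right).
\]
Both sums here are in fact finite, since $\alpha_{k,j}=0$ for $j>k$ forces $r_1+t+1\le k$; relabelling $r=r_1$ and comparing with the definition of $\beta_{k,r}(q)$, this is precisely $\sum_{r=0}^{k-1}\beta_{k,r}(q)(\log u)^{r}$.

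Substituting this identity into the integral and extracting the finitely many, $u$-independent coefficients $\beta_{k,r}(q)$ from the $u$-integration produces the main term $\sum_{r=0}^{k-1}\beta_{k,r}(q)\int_{1}^{X}(\log u)^{r}e(u\beta)\,{\rm d}u$ asserted by the lemma. The closed form $\alpha_{k,r}={\rm Res}\left((s-1)^{r-1}\zeta(s)^{k};s=1\right)$ falls out of the same Laurent expansion, since $(s-1)^{r-1}\zeta(s)^{k}=\sum_{j=1}^{k}\alpha_{k,j}(s-1)^{r-1-j}+(s-1)^{r-1}h_k(s)$ has $(s-1)^{-1}$-coefficient equal to $\alpha_{k,r}$. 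Since the lemma is essentially a bookkeeping reformulation of Proposition \ref{t23}, there is no deep obstacle; the only step that needs genuine care is the residue computation itself — checking that the entire remainders ($h_k(s)$ and the holomorphic part of $u^{s-1}F_k(q,s)$) drop out of the residue, and that the relevant series truncate so that each $\beta_{k,r}(q)$ is a genuinely finite sum — after which the estimate $\beta_{k,r}(q)\ll_k q^{-1+\varepsilon}$ used in the sequel is immediate from the bound on the derivatives of $F_k(q,s)$ in Proposition \ref{t22}.
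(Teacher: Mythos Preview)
Your proposal is correct and follows essentially the same approach as the paper: starting from Proposition \ref{t23}, expanding $\zeta(s)^{k}$, $u^{s-1}$, and $F_k(q,s)$ about $s=1$, and reading off the residue as $\sum_{r=0}^{k-1}\beta_{k,r}(q)(\log u)^{r}$, with the error term carried over unchanged. The paper performs the identical Laurent/Taylor bookkeeping in the paragraphs immediately preceding the lemma (first combining $\zeta(s)^{k}$ with $x^{s-1}$, then with the Taylor series of $F_k(q,s)$), so the only difference is the order in which the three factors are multiplied.
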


The following lemmas will be used in the estimate of the major arcs of the circle method.
\begin{lemma}\label{t25} Let $\alpha=a/q+\beta$ with $q$ be an positive integer and $(a,q)=1$. Define
\[I_F(\alpha,X)=\sum_{{\bf x}\in B_{\ell}(X)}e\left(F({\bf x})\alpha\right).\]
Then
\[
I_F(\alpha,X)=q^{-\ell}S_F(q, a)\int_{[1,X]^{\ell}} e\left(F({\bf t})\beta\right){\rm d}{\bf t}+O_{F}\left(q(1+|\beta|X^2)X^{\ell-1}\right),
\]
where
\[S_F(q,a)=\sum_{{\bf h}\in (\zb_q)^{\ell}}e\left(\frac{a}{q}F({\bf h})\right).\]
\end{lemma}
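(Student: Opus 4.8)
The plan is to detect the residue classes modulo $q$ inside the box $B_\ell(X)$. Write each ${\bf x}\in B_\ell(X)$ uniquely as ${\bf x}={\bf h}+q{\bf y}$ with ${\bf h}$ running over a full set of representatives of $(\zb_q)^\ell$ and ${\bf y}\in\zb^\ell$. Since $F$ has integer coefficients and degree $2$, expanding $F({\bf h}+q{\bf y})$ shows that $F({\bf h}+q{\bf y})-F({\bf h})$ is divisible by $q$, so that $e(\alpha F({\bf h}+q{\bf y}))=e\big(\tfrac aq F({\bf h})\big)e(\beta F({\bf h}+q{\bf y}))$. Grouping the sum accordingly gives
\[
I_F(\alpha,X)=\sum_{{\bf h}\in(\zb_q)^\ell}e\Big(\tfrac aq F({\bf h})\Big)\sum_{{\bf y}\in\zb^\ell\cap D_{\bf h}}e\big(\beta F({\bf h}+q{\bf y})\big),
\]
where $D_{\bf h}=\prod_{i=1}^{\ell}\big[\tfrac{1-h_i}{q},\tfrac{X-h_i}{q}\big]$ is the box cut out by the condition ${\bf h}+q{\bf y}\in[1,X]^\ell$; every side of $D_{\bf h}$ has length $L:=(X-1)/q$.

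The next step is to compare each inner sum with its integral. The key is the substitution ${\bf t}={\bf h}+q{\bf u}$, which maps $D_{\bf h}$ bijectively onto $[1,X]^\ell$ with Jacobian $q^\ell$; hence $\int_{D_{\bf h}}e(\beta F({\bf h}+q{\bf u}))\,{\rm d}{\bf u}=q^{-\ell}\int_{[1,X]^\ell}e(\beta F({\bf t}))\,{\rm d}{\bf t}$ for \emph{every} ${\bf h}$. Setting $G_{\bf h}({\bf u})=e(\beta F({\bf h}+q{\bf u}))$, I must bound
\[
\mathcal E_{\bf h}:=\sum_{{\bf y}\in\zb^\ell\cap D_{\bf h}}G_{\bf h}({\bf y})-\int_{D_{\bf h}}G_{\bf h}({\bf u})\,{\rm d}{\bf u}.
\]
We may assume $2q\le X$: otherwise $qX^{\ell-1}\gg X^{\ell}$ dominates both $|I_F(\alpha,X)|\le X^\ell$ and the claimed main term (since $|q^{-\ell}S_F(q,a)|\le1$ and the integral is $\le X^\ell$ in modulus), so the assertion is trivial. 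When $2q\le X$ we have $L\ge1$, so the standard multidimensional Euler–Maclaurin comparison applies: iterating the one-variable estimate $\sum_{a\le n\le b}f(n)=\int_a^b f+O(\sup|f|+\int_a^b|f'|)$ over the $\ell$ coordinates, and bounding each resulting boundary contribution crudely by (number of lattice points in the relevant lower-dimensional slice) $\times$ ($\sup$ of the relevant partial derivative), yields $|\mathcal E_{\bf h}|\ll_\ell L^{\ell-1}\sup_{D_{\bf h}}|G_{\bf h}|+L^{\ell}\sup_{D_{\bf h}}|\nabla G_{\bf h}|$. Here $|G_{\bf h}|\equiv1$, while for ${\bf u}\in D_{\bf h}$ we have ${\bf h}+q{\bf u}\in[1,X]^\ell$, so $\partial_{x_j}F({\bf h}+q{\bf u})=\sum_i(a_{ij}+a_{ji})(h_i+qu_i)+b_j\ll_F X$, whence $|\nabla G_{\bf h}({\bf u})|=2\pi|\beta|q\,|\nabla F({\bf h}+q{\bf u})|\ll_F|\beta|qX$. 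Since $L\le X/q$ this gives $|\mathcal E_{\bf h}|\ll_{\ell,F}(X/q)^{\ell-1}+(X/q)^{\ell}|\beta|qX=(X/q)^{\ell-1}(1+|\beta|X^2)$.

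Finally I would reassemble. Replacing each inner sum by $q^{-\ell}\int_{[1,X]^\ell}e(\beta F({\bf t}))\,{\rm d}{\bf t}+\mathcal E_{\bf h}$ and summing over ${\bf h}\in(\zb_q)^\ell$, the main parts combine into $q^{-\ell}S_F(q,a)\int_{[1,X]^\ell}e(\beta F({\bf t}))\,{\rm d}{\bf t}$ by the definition of $S_F(q,a)$, while the total error is at most $\sum_{{\bf h}}|\mathcal E_{\bf h}|\le q^\ell\max_{\bf h}|\mathcal E_{\bf h}|\ll_{\ell,F}q^\ell(X/q)^{\ell-1}(1+|\beta|X^2)=q(1+|\beta|X^2)X^{\ell-1}$, which is exactly the claimed bound.

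The one genuinely technical point is the uniform sum-to-integral comparison with the correct dependence on $\sup|\nabla G_{\bf h}|$ (hence on $|\beta|$) while keeping all side lengths $\ge1$; the congruence reduction and the change of variables are then pure bookkeeping. I expect this multidimensional Euler–Maclaurin step — organising the $2^\ell$ boundary-type contributions and tracking where the gradient enters each of them — to be the main obstacle to a fully detailed write-up, although it is entirely routine.
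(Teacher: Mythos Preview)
Your proof is correct and follows the same approach as the paper: both split $I_F(\alpha,X)$ according to residue classes ${\bf h}\in(\zb_q)^\ell$, compare the resulting inner sum with $q^{-\ell}\int_{[1,X]^\ell}e(F({\bf t})\beta)\,{\rm d}{\bf t}$ via a one-variable sum-to-integral estimate applied successively (the paper phrases this as ``partial integration'', you as Euler--Maclaurin with a gradient bound), and then sum the errors over ${\bf h}$. Your treatment is in fact slightly more careful than the paper's, since you explicitly dispose of the trivial range $2q>X$ and track the dependence on $|\beta|$ through $\sup|\nabla G_{\bf h}|$, whereas the paper leaves the iteration of the one-variable bound over the remaining $\ell-1$ coordinates to the reader.
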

\begin{proof}
Firstly,
\[I_F(\alpha,X)=\sum_{{\bf h}\in(\zb_q)^{\ell}}e\left(\frac{a}{q}F({\bf h})\right)\sum_{\substack{{\bf x}\in B_{\ell}(X)\\ {\bf x}\equiv {\bf h}~(\mod q)}}e\left(F({\bf x})\beta\right).\]
We shall prove
\begin{equation}\label{251}
\sum_{\substack{{\bf x}\in B_{\ell}(X)\\ {\bf x}\equiv {\bf h}~(\mod q)}}e\left(F({\bf x})\beta\right)-\frac{1}{q^{\ell}}\int_{[1,X]^{\ell}} e\left(F({\bf t})\beta\right){\rm d}{\bf t}\ll_F (1+|\beta|X^2)(X/q)^{\ell-1},
\end{equation}
which immediately yields the proof. For any $a, b\in\rb $ with $a\ll_F 1$ and $b\ll_F X$, let us consider the follows estimate
\[\sum_{\substack{m\le X\\ m\equiv h\bmod q}}e((am^2+bm)\beta)-q^{-1}\int_{1}^Xe((at^2+bt)\beta){\rm d}t\ll_F 1+|\beta|X^2,\]
which obtained by partial integration directly. So above applied successively for each variables $x_{i}\equiv h_i~(\mod q)$ yields \eqref{251}.
\end{proof}
\begin{lemma}\label{t26} Let $S_F(q,a)$ be defined as in Lemma \ref{t25} with $(q,a)=1$. Then for any $F({\bf x})$ be defined as above, we have
\[
S_F(q, a)\ll_F q^{{\ell}/{2}}.
\]
\end{lemma}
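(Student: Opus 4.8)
The plan is to view $S_F(q,a)$ as the complete Gauss sum attached to the nondegenerate quadratic polynomial $F$ and to establish square‑root cancellation one prime power at a time. First I would record that $S_F(\cdot,a)$ is \emph{multiplicative in the modulus}: for coprime $q_1,q_2$ the partial‑fraction identity $\tfrac{1}{q_1q_2}\equiv\tfrac{\overline{q_2}}{q_1}+\tfrac{\overline{q_1}}{q_2}\pmod 1$ (with $q_2\overline{q_2}\equiv1\pmod{q_1}$ and $q_1\overline{q_1}\equiv1\pmod{q_2}$), together with the Chinese Remainder decomposition of ${\bf h}\bmod q_1q_2$, gives $S_F(q_1q_2,a)=S_F(q_1,a\overline{q_2})\,S_F(q_2,a\overline{q_1})$ with $(a\overline{q_2},q_1)=(a\overline{q_1},q_2)=1$. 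So it suffices to prove $|S_F(p^k,a)|\le C_p\,p^{k\ell/2}$ for every prime power with $(a,p)=1$, where $C_p=1$ for all but the finitely many $p$ dividing $2\Delta_F$; multiplying these local bounds over $p\mid q$ then yields $|S_F(q,a)|\le\big(\prod_{p\mid 2\Delta_F}C_p\big)\,q^{\ell/2}\ll_F q^{\ell/2}$.

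For a \emph{good prime} $p$ (odd, with $p\nmid\Delta_F$) I would evaluate $S_F(p^k,a)$ exactly. Here $2$ and $A:=Q_m^t+Q_m$ are units in $\zb/p^k\zb$, so writing $F({\bf x})=\tfrac12{\bf x}^tA{\bf x}+{\bf b}^t{\bf x}+c$ and substituting ${\bf x}\mapsto{\bf x}-A^{-1}{\bf b}$ removes the linear term up to an additive constant; a unimodular congruence (diagonalisation of the symmetric matrix $A$ over $\zb/p^k\zb$, legitimate because $p$ is odd) then reduces the quadratic part to $\sum_{i=1}^{\ell}d_iz_i^2$, and $\prod_i d_i$ equals $\Delta_F$ times a unit square, so every $d_i$ is a unit. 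The sum factors into $\ell$ classical quadratic Gauss sums $\sum_{z\bmod p^k}e(\lambda z^2/p^k)$ with $(\lambda,p)=1$, each of absolute value exactly $p^{k/2}$, whence $|S_F(p^k,a)|=p^{k\ell/2}$, i.e.\ $C_p=1$. It is essential that the constant is $1$ here, since $q$ may have many prime factors of exponent $\ge2$ and a $p$‑dependent loss at each would ruin the final bound.

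For the \emph{bad primes} $p\mid 2\Delta_F$, which number $O_F(1)$, a $p$‑dependent constant is harmless and a cruder argument suffices. For $k=1$ the trivial bound $|S_F(p,a)|\le p^{\ell}$ already gives $|S_F(p,a)|\ll_p p^{\ell/2}$. For $k\ge2$ I would run one Weyl‑differencing step: with $m=\lceil k/2\rceil$, writing ${\bf x}={\bf u}+p^m{\bf v}$ (so ${\bf u}$ runs over $(\zb/p^m\zb)^{\ell}$ and ${\bf v}$ over $(\zb/p^{k-m}\zb)^{\ell}$) and using $2m\ge k$ gives $F({\bf u}+p^m{\bf v})\equiv F({\bf u})+p^m{\bf v}^t\nabla F({\bf u})\pmod{p^k}$ with $\nabla F({\bf u})=A{\bf u}+{\bf b}$; the sum over ${\bf v}$ vanishes unless $A{\bf u}+{\bf b}\equiv{\bf 0}\pmod{p^{k-m}}$, and by the Smith normal form of $A$ the number of admissible ${\bf u}\bmod p^m$ is at most $p^{v_p(\Delta_F)}p^{(2m-k)\ell}$. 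Hence $|S_F(p^k,a)|\le p^{v_p(\Delta_F)}p^{m\ell}\le p^{v_p(\Delta_F)+\ell/2}p^{k\ell/2}$, a bound with constant depending only on $p$. Combined with the good‑prime evaluation this completes the proof.

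I expect the main obstacle to be the good‑prime step, specifically the need to diagonalise $A$ \emph{over the ring $\zb/p^k\zb$} rather than merely modulo $p$, and to verify that every diagonal entry remains a unit so that each factor contributes the full $p^{k/2}$ with no stray constant; only then does the product over the prime factors of $q$ stay bounded by $q^{\ell/2}$. The Chinese Remainder reduction and the Weyl‑differencing bound for the ramified primes are comparatively routine, the latter being permitted $p$‑dependent losses precisely because only $O_F(1)$ primes divide $2\Delta_F$.
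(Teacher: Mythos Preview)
Your proof is correct, but the paper takes a much more direct route. Instead of localising via the Chinese Remainder Theorem and treating good and bad primes separately, the paper simply squares the sum: writing
\[
|S_F(q,a)|^2=\sum_{\mathbf{h},\mathbf{k}\in(\zb_q)^{\ell}}e\Big(\tfrac{a}{q}\big(F(\mathbf{h}+\mathbf{k})-F(\mathbf{k})\big)\Big)
\]
and using $F(\mathbf{h}+\mathbf{k})-F(\mathbf{k})=F(\mathbf{h})-c+\mathbf{h}^t(Q_m^t+Q_m)\mathbf{k}$, the inner sum over $\mathbf{k}$ detects the condition $(Q_m^t+Q_m)\mathbf{h}\equiv\mathbf{0}\pmod q$, giving
\[
|S_F(q,a)|^2\ll q^{\ell}\cdot\#\{\mathbf{h}\in(\zb_q)^{\ell}:(Q_m^t+Q_m)\mathbf{h}\equiv\mathbf{0}\bmod q\}.
\]
Since $\Delta_F=\det(Q_m^t+Q_m)\neq 0$, the Smith normal form of $Q_m^t+Q_m$ shows this count is at most $|\Delta_F|$, hence $\ll_F 1$, and the bound follows for every modulus $q$ at once with no case distinction.

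Your approach has the virtue of producing the \emph{exact} value $|S_F(p^k,a)|=p^{k\ell/2}$ at unramified primes, and your stationary‑phase bound at the ramified primes is essentially the local incarnation of the paper's global squaring step. For the bare inequality $S_F(q,a)\ll_F q^{\ell/2}$, however, the single squaring argument is dramatically shorter and sidesteps entirely the diagonalisation of $A$ over $\zb/p^k\zb$ that you correctly identified as the delicate point of your plan.
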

\begin{proof} Firstly,
\begin{align*}
\left|S_F(q, a)\right|^2&=S_F(q, a)\overline{S_F(q, a)}=\sum_{{\bf h}\in (\zb_q)^{\ell}}e\left(\frac{a}{q}F({\bf h})\right)\sum_{{\bf k}\in (\zb_q)^{\ell}}e\left(-\frac{a}{q}F({\bf k})\right)\\
&=\sum_{{\bf k}\in(\zb_q)^{\ell}}\sum_{{\bf h+k}\in(\zb_q)^{\ell}}e\left(\frac{a}{q}\left(F({\bf h+k})-F({\bf k})\right)\right).
\end{align*}
It is easily seen that
\[
F({\bf h+k})-F({\bf k})
=F({\bf h})-c+{\bf h}^t({Q_{m}+Q_m^t}){\bf k}.
\]
Hence we deduce that
\begin{align*}
\left|S_F(q, a)\right|^2&=\sum_{{\bf k}\in(\zb_q)^{\ell}}\sum_{{\bf h}\in(\zb_q)^{\ell}}e\left(\frac{a}{q}\left(F({\bf h})-c+{\bf h}^t({Q_{m}+Q_m^t}){\bf k}\right)\right)\\
&=q^{\ell}\sum_{{\bf h}\in(\zb_q)^{\ell},({Q_{m}+Q_m^t}){\bf h}\equiv {\bf 0}~(\mod q)}e\left(\frac{a}{q}\left(F({\bf h})-c\right)\right)\\
&\ll q^{\ell} \#\{{\bf h}\in(\zb_q)^{\ell}:({Q_{m}+Q_m^t}){\bf h}\equiv {\bf 0}\bmod q\}.
\end{align*}
Since $({Q_{m}+Q_m^t})$ is nonsingular, hence
\[
\left|S_F(q, a)\right|^2\ll q^{\ell}\#\{{\bf h}\in(\zb_q)^{\ell}:({Q_{m}+Q_m^t}){\bf h}\equiv {\bf 0}\bmod q\}\ll_{F} q^{\ell}.
\]
This completes the proof.
\end{proof}

To give a good estimate for $I_F(\alpha,X)$ in the minor arcs of the circle method, we need the following lemmas.
\begin{lemma}\label{t27}Let $A\in M_{\ell}(\zb)$ be a nonsingular matrix with column vectors ${\bf a}_1,...,{\bf a}_{\ell}$. Also let let $\alpha=a/q+\beta$ with $q$ be an positive integer, $(a,q)=1$ and $|\beta|\le q^{-2}$. Define
\[
H(X, A, \alpha)=\sum_{{\bf x}\in B(X)}\prod_{1\le v\le \ell}\min\left(X, \parallel {\bf a}_{v}^t{\bf x}\alpha\parallel^{-1}\right),
\]
where $B(X)=[1,X]^{\ell}\cap \zb^{\ell}$. Then
$$H(X, A, \alpha)\ll_{A} X^{2\ell} q^{-{\ell}}+X^{\ell}\log^{{\ell}}q+q^{\ell}\log^{\ell}q. $$
\end{lemma}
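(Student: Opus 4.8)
The plan is to reduce the $\ell$-dimensional sum $H(X,A,\alpha)$ to a product of one-dimensional sums of the classical shape $\sum_{y\le Y}\min(X,\|y\alpha\|^{-1})$, which is the standard Weyl-type sum estimated in Vinogradov's and Vaughan's work. First I would make the linear change of variables ${\bf y}=A^t{\bf x}$. Since $A$ is nonsingular with integer entries, as ${\bf x}$ ranges over the box $B(X)=[1,X]^\ell\cap\zb^\ell$ the vector ${\bf y}$ ranges over a subset of a dilated box $[-C_AX,C_AX]^\ell\cap\zb^\ell$ for a constant $C_A$ depending only on $A$ (in fact on the operator norm of $A^t$), and each ${\bf y}$ is hit at most $|\det A|$ times (at most once, really, but a bounded multiplicity is all we need). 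Writing ${\bf a}_v^t{\bf x}=y_v$ for the $v$-th coordinate, the product over $v$ decouples completely, so
\[
H(X,A,\alpha)\ll_A \prod_{1\le v\le \ell}\;\sum_{|y_v|\le C_AX}\min\!\left(X,\,\|y_v\alpha\|^{-1}\right)\ll_A \left(\sum_{0\le y\le C_AX}\min\!\left(X,\,\|y\alpha\|^{-1}\right)\right)^{\ell}.
\]
Here I absorb the $y=0$ term (which contributes $X$) and use evenness of $\|\cdot\|$ to restrict to $y\ge 0$.

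Next I would invoke the well-known one-dimensional bound: for $\alpha=a/q+\beta$ with $(a,q)=1$ and $|\beta|\le q^{-2}$, and any $Y\ge 1$,
\[
\sum_{0\le y\le Y}\min\!\left(X,\,\|y\alpha\|^{-1}\right)\ll \left(\frac{Y}{q}+1\right)\!\left(X+q\log q\right)\ll \frac{XY}{q}+X+Y\log q+q\log q,
\]
which is Lemma 2.2 of Vaughan's book (the proof splits $[0,Y]$ into $\lceil Y/q\rceil+1$ blocks of length $q$, and on each block the values $\|y\alpha\|$ are well-separated because $y\beta$ moves by at most $q|\beta|\le 1/q$ across a block, so the contribution of each block is $\ll X+q\log q$). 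Applying this with $Y=C_AX$ gives a one-dimensional bound $\ll_A (X^2/q)+X\log q+q\log q$, and raising to the $\ell$-th power,
\[
H(X,A,\alpha)\ll_A \left(\frac{X^2}{q}+X\log q+q\log q\right)^{\ell}.
\]
Expanding the $\ell$-th power by the multinomial theorem produces a sum of terms $X^{2i}q^{-i}\cdot(X\log q)^{j}\cdot(q\log q)^{\ell-i-j}$; since $1\le q\le X$ in the regime of interest (and $q\le X$ may be assumed, as $q>X$ makes the trivial bound $X^\ell$ already better than the claimed $q^\ell\log^\ell q$), every mixed term is dominated by one of the three pure terms $X^{2\ell}q^{-\ell}$, $X^\ell\log^\ell q$, $q^\ell\log^\ell q$. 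This yields exactly the asserted inequality
\[
H(X,A,\alpha)\ll_A X^{2\ell}q^{-\ell}+X^{\ell}\log^{\ell}q+q^{\ell}\log^{\ell}q.
\]

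The only genuinely delicate point is the change of variables: one must check that $\|{\bf a}_v^t{\bf x}\,\alpha\|$ really is $\|y_v\alpha\|$ with $y_v\in\zb$ (true since $A\in M_\ell(\zb)$), and that the image lattice points $\{A^t{\bf x}:{\bf x}\in B(X)\}$ lie in a box of side $O_A(X)$ with bounded multiplicity — this is where the hypothesis that $A$ is a fixed nonsingular integer matrix (so that the constant genuinely depends only on $A$, not on $X$ or $q$) is used. After that the argument is entirely a matter of assembling the classical one-dimensional estimate and a multinomial bookkeeping step, both routine. I would not expect the circle-method context to add any complication here; the lemma is a self-contained geometry-of-numbers plus exponential-sum statement.
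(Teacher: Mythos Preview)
Your argument is correct, and it is genuinely different from --- and simpler than --- the paper's proof. The paper does not perform the linear change of variables ${\bf y}=A^t{\bf x}$; instead it works directly with the coupled sum. It writes ${\bf x}=q{\bf x}'+{\bf h}$ with ${\bf h}\in(\zb_q)^\ell$, and for each fixed ${\bf x}'$ it shows by an explicit equidistribution argument (using $(a,q)=1$ and $\det A\neq 0$) that as ${\bf h}$ ranges over $(\zb_q)^\ell$ the vector $\bigl(\|{\bf a}_1^t(q{\bf x}'+{\bf h})\alpha\|,\dots,\|{\bf a}_\ell^t(q{\bf x}'+{\bf h})\alpha\|\bigr)$ occupies each cube of side $1/q$ in $[0,1/2)^\ell$ only $O_A(1)$ times. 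Summing over those cubes reproduces, from first principles, the bound $X^\ell+q^\ell\log^\ell q$ for each block, and then one sums over the $\ll(1+X/q)^\ell$ blocks.

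Your approach sidesteps all of that: the injective integer linear map $A^t$ lets you enlarge the domain to a full box, after which the summand is a product over independent coordinates and factors outright, reducing the problem to the classical one-dimensional estimate. This is shorter and more conceptual; the paper's route is self-contained (it re-derives the spacing argument in $\ell$ dimensions rather than citing the one-dimensional lemma) and would be the natural template if the linear forms ${\bf a}_v^t{\bf x}$ were replaced by something that does not decouple under a global change of variable. One small simplification: you do not need the case split on $q\le X$ in the last step, since for nonnegative reals $(a+b+c)^\ell\le 3^{\ell}(a^\ell+b^\ell+c^\ell)$ already gives
\[
\Bigl(\tfrac{X^2}{q}+X\log q+q\log q\Bigr)^{\ell}\ll_\ell \frac{X^{2\ell}}{q^{\ell}}+X^{\ell}\log^{\ell}q+q^{\ell}\log^{\ell}q
\]
unconditionally.
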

\begin{proof} Firstly we have
\[
\sum_{{\bf x}\in B(X)}\prod_{1\le v\le \ell}\min\left(X,\parallel {\bf a}_{v}^t{\bf x}\alpha\parallel^{-1}\right)\le
\sum_{{\bf x}\in B(X/q)}\sum_{{\bf h}\in(\zb_q)^{\ell}}\prod_{1\le v\le \ell}\min\left(X,\parallel {\bf a}_{v}^t(q{\bf x}+{\bf h})\alpha\parallel^{-1}\right).
\]
For the inner sum above
\begin{equation}\label{imeq}
U(X,A,{\bf x},q)=\sum_{{\bf h}\in(\zb_q)^{\ell}}\prod_{1\le v\le \ell}\min\left(X,\parallel {\bf a}_{v}^t(q{\bf x}+{\bf h})\alpha\parallel^{-1}\right)
\end{equation}
notes that $\alpha=a/q+\beta$, then
$${\bf a}_{v}^t(q{\bf x}+{\bf h})\alpha\equiv \frac{a{\bf a}_{v}^t{\bf h}}{q}+{\bf a}_{v}^t(q{\bf x}+{\bf h})\beta ~(\mod 1).$$
On the other hand, for each $v=1,2,...,\ell$, there exists some $t_v\in[0, 1-1/q)$ and uniquely ${\bf h}'=(h_{1}',...,h_{\ell}')^t\in\mathbb{Z}^{\ell}$ such that
\[
\left\{{\bf a}_{v}^t(q{\bf k}+{\bf h})\alpha\right\}={a{\bf a}_{v}^t{\bf h}}/{q}+{\bf a}_{v}^t(q{\bf x}+{\bf h})\beta-h_v'\in\left[t_v,t_v+1/q\right],
\]
namely
\begin{equation}\label{intvel}
a{\bf a}_{v}^t{\bf h}-qh_v'\in\left[qt_v-{\bf a}_{v}^t(q{\bf x}+{\bf h})q\beta, qt_v-{\bf a}_{v}^t(q{\bf x}+{\bf h})q\beta+1\right].
\end{equation}
In this case, it has uniformly ${\bf a}_{v}^t{\bf h}q\beta\ll_A 1$ for all ${\bf h}\in(\zb_q)^{\ell}$,
which implies the number of integers on above interval bounded by $O_A(1)$. For each $v$, let $N_v$ be an integer of above interval. If there exists an ${\bf h}\in(\zb_q)^{\ell}$ such that ${\bf a}_{v}^t{\bf h}-qh_v'=N_v$ for all $v=1,2,...,\ell$, namely
\begin{equation}\label{ieq}
A{\bf h}-q{\bf h}_v'={\bf N}_v,
\end{equation}
then $(a,q)=1$ and $A$ nonsingular implies
 \[\#\{{\bf h}\in(\zb_q)^{\ell}:aA{\bf h}\equiv {\bf N_v}\bmod q\}\ll_A 1.\]
Hence the number of ${\bf h}\in(\zb_q)^{\ell}$ satisfying (\ref{ieq}) be bounded by $O_A(1)$. Furthermore, for all $(t_1,t_2,...,t_{\ell})\in[0, 1-1/q)^{\ell}$, the number of ${\bf h}\in(\zb_q)^{\ell}$ satisfy the condition
\[
\left(\left\{{\bf a}_{v}^t(q{\bf x}+{\bf h})\alpha\right\},...,\left\{{\bf a}_{v}^t(q{\bf x}+{\bf h})\alpha\right\}\right)\in\left[t_1,t_1+1/q\right]\times...\times\left[t_v,t_v+1/q\right]
\]
bounded by $O_A(1)$. On the other hand $\parallel{\bf a}_{v}^t(q{\bf x}+{\bf h})\alpha\parallel\in\left[t_v,t_v+1/q\right]$ if and only if
\[
\left\{{\bf a}_{v}^t(q{\bf x}+{\bf h})\alpha\right\}\in\left[t_v,t_v+1/q\right]\;\mbox{or}\; 1-\left\{{\bf a}_{v}^t(q{\bf x}+{\bf h})\alpha\right\}\in\left[t_v,t_v+1/q\right].
\]
Hence for all $(t_1,t_2,...,t_{\ell})\in[0, 1-1/q)^{\ell}$, the number of ${\bf h}\in(\zb_q)^{\ell}$ satisfying $\parallel{\bf a}_{v}^t(q{\bf x}+{\bf h})\alpha\parallel\in\left[t_v,t_v+1/q\right]$ by bounded by $O_A(1)$.

For the convenience of discussion, $\forall s_1, s_2, . . . , s_{\ell}\in[0, q/2)\cap\mathbb{Z}$, we denote
\[K({\bf s})=\left[\frac{s_1}{q},\frac{s_1+1}{q}\right]\times\left[\frac{s_2}{q},\frac{s_2+1}{q}\right]\times\cdot\cdot\cdot\times
\left[\frac{s_{\ell}}{q},\frac{s_{\ell}+1}{q}\right]\]
and
\[A(q,{\bf x},{\bf h})=\left(\parallel{\bf a}_{1}^t(q{\bf x}+{\bf h})\alpha\parallel,...,\parallel{\bf a}_{\ell}^t(q{\bf x}+{\bf h})\alpha\parallel\right).\]
Then
\[\#\{{\bf h}\in(\zb_q)^{\ell}: A(q,{\bf x},{\bf h})\in K({\bf s})\}\ll_A 1.\]
Thus the sum (\ref{imeq}) can be rewritten as
\begin{align*}
U(X,A,{\bf x},q)&\ll \sum_{{\bf s}\in [0,q/2)^{\ell}\cap\mathbb{Z}^{\ell}}\sum_{A(q,{\bf x},{\bf h})\in K({\bf s})}\prod_{1\le v\le \ell}\min\left(X,\parallel {\bf a}_{v}^t(q{\bf x}+{\bf h})\alpha\parallel^{-1}\right)\\
&\ll\sum_{0\le m\le \ell}\sum_{\substack{{\bf s}\in [0,q/2)^{\ell}\cap\mathbb{Z}^{\ell}\\ {m~components~of~{\bf s}~not~0}}}\sum_{A(q,{\bf x},{\bf h})\in K({\bf s})}\prod_{1\le v\le \ell}\min\left(X,\parallel {\bf a}_{v}^t(q{\bf x}+{\bf h})\alpha\parallel^{-1}\right)\\
&\ll_A\sum_{0\le m\le\ell}\sum_{{\bf s}\in [1,q/2)^{m}\cap\mathbb{Z}^{m}}X^{\ell-m}\prod_{1\le v\le m}\min\left(X,\frac{q}{s_v}\right)\\
&\ll X^{\ell}\sum_{0\le m\le\ell}\sum_{{\bf s}\in [1,q)^{m}\cap\mathbb{Z}^{m}}\prod_{1\le v\le m}\frac{q}{Xs_v}\ll X^{\ell}\sum_{0\le m\le\ell}\left(\frac{q}{X}\right)^m\left(\sum_{1\le s\le q}\frac{1}{s}\right)^m\\
&\ll X^{\ell}\sum_{0\le m\le\ell}\left(\frac{q\log q}{X}\right)^m\le \ell( X^{\ell}+q^{\ell}\log^{\ell} q).
\end{align*}
Therefore we obtain that
\[
H(X,A,\alpha)\ll_{A} \sum_{{\bf x}\in B_{\ell}(X/q)}\left( X^{\ell}+q^{\ell}\log^{\ell} q\right)\ll \left(1+\frac{X}{q}\right)^{\ell}\left( X^{\ell}+q^{\ell}\log^{\ell} q\right).
\]
which completes the proof of the lemma.
\end{proof}
By this lemma, we have a nontrivial estimate for $I_F(\alpha, X)$ as follows.
\begin{lemma}\label{t28}Let $F({\bf x})$ defined by (\ref{fdef1}) and (\ref{fdef2}). Also let let $\alpha=a/q+\beta$ with $q$ be an positive integer, $(a,q)=1$ and $|\beta|\le q^{-2}$. Then, we have
\[
I_F(\alpha, X)\ll_{F} X^{\ell} q^{-\frac{\ell}{2}}+X^{\frac{\ell}{2}}\log^{\frac{\ell}{2}}q+q^{\frac{\ell}{2}}\log^{\frac{\ell}{2}}q.
\]
\end{lemma}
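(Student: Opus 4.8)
My plan is to apply Weyl differencing once and then feed the resulting sum of products of $\min\left(X,\parallel\cdot\parallel^{-1}\right)$ straight into Lemma~\ref{t27}, applied to the matrix $A=Q_m+Q_m^t$ (and a few harmless variants of it). Expanding the square,
\[
\left|I_F(\alpha,X)\right|^2=\sum_{{\bf x},{\bf y}\in B_{\ell}(X)}e\left(\alpha\left(F({\bf x})-F({\bf y})\right)\right),
\]
and putting ${\bf x}={\bf y}+{\bf z}$ I would use the identity already derived in the proof of Lemma~\ref{t26},
\[
F({\bf y}+{\bf z})-F({\bf y})=F({\bf z})-c+{\bf z}^t\left(Q_m+Q_m^t\right){\bf y},
\]
to rewrite this as
\[
\left|I_F(\alpha,X)\right|^2=\sum_{{\bf z}}e\left(\alpha\left(F({\bf z})-c\right)\right)\sum_{\substack{{\bf y}\in B_{\ell}(X)\\ {\bf y}+{\bf z}\in B_{\ell}(X)}}e\left(\alpha\,{\bf z}^t\left(Q_m+Q_m^t\right){\bf y}\right),
\]
where ${\bf z}$ runs over $(-X,X)^{\ell}\cap\zb^{\ell}$.

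For fixed ${\bf z}$ the two box conditions confine each $y_v$ to an interval of length at most $X$, and since $\,{\bf z}^t(Q_m+Q_m^t){\bf y}=\sum_{v=1}^{\ell}\left((Q_m+Q_m^t){\bf z}\right)_v y_v\,$ the inner sum factorises into $\ell$ geometric progressions; bounding each by the elementary estimate $\ll\min\!\left(X,\parallel\alpha\left((Q_m+Q_m^t){\bf z}\right)_v\parallel^{-1}\right)$ and taking absolute values gives
\[
\left|I_F(\alpha,X)\right|^2\ll\sum_{{\bf z}\in(-X,X)^{\ell}\cap\zb^{\ell}}\ \prod_{1\le v\le\ell}\min\!\left(X,\parallel\alpha\left((Q_m+Q_m^t){\bf z}\right)_v\parallel^{-1}\right).
\]
Splitting the range of ${\bf z}$ into its $O_{\ell}(1)$ sign patterns, using that $\parallel\cdot\parallel$ is even and that flipping the signs of a subset of columns of $Q_m+Q_m^t$ again yields a nonsingular integer matrix of determinant $\pm\Delta_F$, each non-degenerate pattern contributes $\ll_F H(X,B,\alpha)$ for such a $B$ (the all-positive pattern giving exactly $H(X,Q_m+Q_m^t,\alpha)$). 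For a pattern in which ${\bf z}$ has $m$ vanishing coordinates I would bound $m$ of the factors trivially by $X$ and the remaining $\ell-m$ by a subproduct indexed by a row set along which the corresponding $(\ell-m)\times(\ell-m)$ minor of $Q_m+Q_m^t$ is nonsingular (such a set exists since the columns of $Q_m+Q_m^t$ are independent); inserting Lemma~\ref{t27} in dimension $\ell-m$ one checks this is of order at most $X^{2\ell}q^{-\ell}+X^{\ell}\log^{\ell}q+q^{\ell}\log^{\ell}q$ in every range of $q$.

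It then remains to invoke Lemma~\ref{t27}: the matrices $B$ above lie in ${\rm M}_{\ell}(\zb)$ and are nonsingular because $\det(Q_m+Q_m^t)=\Delta_F\neq0$ by \eqref{fdef2}, and the hypothesis $|\beta|\le q^{-2}$ is precisely the one assumed; hence
\[
\left|I_F(\alpha,X)\right|^2\ll_F X^{2\ell}q^{-\ell}+X^{\ell}\log^{\ell}q+q^{\ell}\log^{\ell}q,
\]
and taking square roots (via $\sqrt{a+b+c}\le\sqrt a+\sqrt b+\sqrt c$) yields the assertion. The only genuine difficulty — the lattice-point count behind the bound for $H(X,A,\alpha)$ — is already disposed of in Lemma~\ref{t27}; here the only points demanding a little care are that the difference variable ${\bf z}$ ranges over a symmetric box rather than $[1,X]^{\ell}$, so that sign patterns (including the degenerate ones) have to be sorted out, and that the linear forms produced by the differencing are exactly those attached to $Q_m+Q_m^t$, whose nonsingularity is the content of the discriminant condition $\Delta_F\neq0$.
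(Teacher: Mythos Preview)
Your argument is correct and follows essentially the same route as the paper: apply Weyl differencing once, factor the inner sum over ${\bf y}$ into a product of $\min(X,\parallel\cdot\parallel^{-1})$ attached to the rows of $Q_m+Q_m^t$, and then feed this into (the method of) Lemma~\ref{t27}. The paper simply says ``using the same method as in the proof of Lemma~\ref{t27}'' at the last step, whereas you are more explicit in reducing the symmetric box $(-X,X)^{\ell}$ to $[1,X]^{\ell}$ via sign patterns and in disposing of the degenerate patterns by a minor argument; this extra care is harmless and welcome, but the underlying idea is identical.
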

\begin{proof}First of all, we have that
\begin{align*}
\left|I_F(X,\alpha)\right|^2&=\ssum_{{\bf x,y}\in B_{\ell}(X)}e\left((F({\bf x})-F({\bf y}))\alpha\right)\\
&=\sum_{{\bf x}\in\mathbb{Z}^{\ell}\cap (-X,X)^{\ell}}\sum_{{\bf y},{\bf y+x}\in B_{\ell}(X)}e\left(\left(F({\bf x+y})-F({\bf y})\right)\alpha\right)\\
&=\sum_{{\bf x}\in\mathbb{Z}^{\ell}\cap (-X,X)^{\ell}}\sum_{{\bf y},{\bf x+y}\in B_{\ell}(X)}e\left(\left(F({\bf x})-c+{\bf y}^t(Q_m^t+Q_m){\bf x}\right)\alpha\right)\\
&\ll\sum_{{\bf h}\in\mathbb{Z}^{\ell}\cap (-X,X)^{\ell}}\left|\sum_{{\bf y},{\bf x+y}\in B_{\ell}(X)}e\left({\bf y}^t(Q_m^t+Q_m){\bf x}\alpha\right)\right|.
\end{align*}
Now we write the symmetric matrix ${Q_{mS}}=({\bf Q}_1, {\bf Q}_2, . . , {\bf Q}_{\ell})^t~({\bf Q}_j\in\mathbb{Z}^{\ell}, j=1,...,\ell)$. Then, using the fact that
\[
\sum_{a< n\le b}e(n\alpha)\ll\min\left(b-a+1,\parallel 2\alpha\parallel^{-1}\right)
\]
we obtain that
\begin{align*}
&\sum_{{\bf y},{\bf x+y}\in B_{\ell}(X)}e\left({\bf y}^t(Q_m^t+Q_m){\bf x}\alpha\right)\\
&\qquad=\sum_{\substack{ y_{r},x_{r}+y_r\in[1,X]\cap\mathbb{Z}\\r=1,...,\ell}}\left(\prod_{1\le v\le \ell}e\left(y_v{\bf Q}_{v}{\bf x}\alpha\right)\right)
=\prod_{1\le v\le \ell}\left(\sum_{y_{v},x_{v}+y_v\in[1,X]\cap\mathbb{Z}}e\left(y_v{\bf Q}_{v}{\bf x}\alpha\right)\right)\\
&\qquad\ll \prod_{1\le v\le \ell}\min\left(X-\left|h_{v}\right|,\parallel 2{\bf Q}_{v}{\bf x}\alpha\parallel^{-1}\right)\ll \prod_{1\le v\le \ell}\min\left({X},\parallel 2{\bf Q}_{v}{\bf x}\alpha\parallel^{-1}\right).
\end{align*}
Using the same method as in the proof of Lemma \ref{t28}, we can derive that
\begin{align*}
\sum_{{\bf x}\in\mathbb{Z}^{\ell}\cap (-X,X)^{\ell}}\prod_{1\le v\le \ell}\min\left({X},\parallel 2 {\bf Q}_{v}{\bf x}\alpha\parallel^{-1}\right)\ll_{F}X^{2\ell}q^{-\ell}+X^{\ell}\log^{\ell}q+q^{\ell}\log^{\ell}q.
\end{align*}
This completes the proof of the lemma.
\end{proof}
\section{Singular integral}
The well known results says that the gaussian integral
\[\int_{\rb^{\ell}}{\rm d}{\bf x}\exp\left(-{\bf x}^tA{\bf x}\right)\]
converges if $A$ is a symmetric complex matrix with the real part of $A$ is non-negative and no eigenvalue $\alpha_i$ of $A$ vanishes.
Hence we obtain that
\begin{align}\label{eq31}
&\int_{[0, 1]^{\ell}}e\left(\left({\bf t}^tQ_{m}{\bf t}+{{\bf b}^t}{\bf t}/{X}+{c}/{X^2}\right)\lambda\right)\mathrm{d}{\bf t}\nonumber\\
&\qquad=\left|\lambda\right|^{\frac{\ell}{2}}\int_{[0,\sqrt{\left|\lambda\right|}]^{\ell}}e\left(\left({\bf u}^tQ_{m}{\bf u}+\frac{{\bf b}^t\sqrt{\left|\lambda\right|}}{X}{\bf u}+\frac{c\left|\lambda\right|}{X^2}\right){\rm sign}(\lambda)\right)\mathrm{d}{\bf u}\ll_{F}\left|\lambda\right|^{-\frac{\ell}{2}},
\end{align}
where ${\rm sign}(\lambda)$ is general symbol function and $\lambda\neq 0$. We have the following
lemma.
\begin{lemma}\label{t31}We have
\[\int\limits_{\left|\beta\right|\le \frac{Q}{qX^2}}\int\limits_{[1,X]^{\ell}}\int\limits_{1}^{N_F(X)}{\rm d}\beta{\rm d}{\bf t}{\rm d}u e\left((F({\bf t})-u)\beta\right)(\log u)^r-\int\limits_{[1, X]^{\ell}}\mathrm{d}{\bf t}(\log(F({\bf t})))^r\ll_{k,F}X^{\ell+\varepsilon}\left(\frac{q}{Q}\right)^{\frac{\ell}{2}}.\]
\end{lemma}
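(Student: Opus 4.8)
The plan is to perform the $\beta$-integration first, exploiting the fact that $\int_{\mathbb{R}}\int_1^{N_F(X)}e((F(\mathbf{t})-u)\beta)\,\mathrm{d}u\,\mathrm{d}\beta$ formally collapses to evaluation at $u=F(\mathbf{t})$ via Fourier inversion, and then to control the two errors incurred: truncating the $\beta$-range to $|\beta|\le Q/(qX^2)$, and the fact that $F(\mathbf{t})$ need not lie in $[1,N_F(X)]$ at the corners of the box. More precisely, I would first swap the order of integration to write the triple integral as $\int_{[1,X]^\ell}(\log$-weight$)\,G(\mathbf{t})\,\mathrm{d}\mathbf{t}$ is not quite legitimate because of the $(\log u)^r$ weight, so instead I would integrate in $u$ against $e(-u\beta)$ by parts, or more simply keep $(\log u)^r$ attached and bound things crudely: split the analysis according to whether $F(\mathbf{t})$ is $\gg X^2$ (the generic case) or not.

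\emph{Step 1: reduce to a one-dimensional oscillatory estimate.} For fixed $\mathbf{t}\in[1,X]^\ell$, set $y=F(\mathbf{t})$, which satisfies $1\ll_F y\ll_F X^2$ and, away from an $O(X^{\ell-1})$-measure neighbourhood of $\partial[1,X]^\ell$, one has $y\gg_F X^2$. The inner double integral is $\int_1^{N_F(X)}(\log u)^r\big(\int_{|\beta|\le Q/(qX^2)}e((y-u)\beta)\,\mathrm{d}\beta\big)\,\mathrm{d}u$. The Dirichlet-kernel-type inner integral equals $\frac{\sin(2\pi T(y-u))}{\pi(y-u)}$ with $T=Q/(qX^2)$, so I must show $\int_1^{N_F(X)}(\log u)^r\frac{\sin(2\pi T(y-u))}{\pi(y-u)}\,\mathrm{d}u=(\log y)^r+O\big((\,\text{error}\,)\big)$ uniformly in $y$ in the relevant range, and then integrate the error over $\mathbf{t}$.

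\emph{Step 2: estimate the one-dimensional error.} Write $\phi(u)=(\log u)^r$, which is smooth on $[1,N_F(X)]$ with $\phi'(u)\ll (\log X)^{r}/u$. Using the standard fact that $\int_{-\infty}^{\infty}\frac{\sin(2\pi T v)}{\pi v}\,\mathrm{d}v=1$ and that $\int_{|v|>\eta}\frac{\sin(2\pi Tv)}{\pi v}\,\mathrm{d}v\ll (T\eta)^{-1}$, I would split at $|u-y|\le \eta$ and $|u-y|>\eta$ with a parameter $\eta$ to be chosen (something like $\eta=y\,(q/Q)^{1/2}$ or a small power thereof). On the near range, $\phi(u)=\phi(y)+O(|u-y|\,\phi'(y))$ and the main term $\phi(y)$ is produced up to the tail $O((T\eta)^{-1})$; the first-order term contributes $O(\eta\,\phi'(y))\cdot\|\tfrac{\sin}{\cdot}\|$. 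On the far range, integrate by parts once moving the derivative onto $\phi$ (and handling the endpoints $u=1$, $u=N_F(X)$, where $|u-y|\gg_F y$ since $y\le N_F(X)$ but $y$ is bounded away from the endpoints for generic $\mathbf{t}$) to gain a factor $T^{-1}$ against $\phi'/|u-y|$, contributing $O(T^{-1}\eta^{-1}(\log X)^r)$ plus endpoint terms of size $O(T^{-1}y^{-1}(\log X)^r)$. Optimising $\eta$, and recalling $T=Q/(qX^2)$ and $y\gg_F X^2$ generically, each piece is $\ll_{F,k} (\log X)^{O(1)}(q/Q)^{1/2}$ times something like $X^{-2}$ or a constant; multiplying by the volume $X^\ell$ of the box and absorbing logarithms into $X^\varepsilon$ gives the claimed $X^{\ell+\varepsilon}(q/Q)^{\ell/2}$ — note that since $q\le Q$ we have $(q/Q)^{1/2}\le (q/Q)^{\ell/2}$ only when... actually $(q/Q)^{1/2}\ge (q/Q)^{\ell/2}$ for $\ell\ge 2$, so I will need the stronger exponent $\ell/2$ here, which must come from the measure of the non-generic boundary region, see Step 3.

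\emph{Step 3: the boundary region and the main obstacle.} The subtle point — and the main obstacle — is the gap between the exponent $1/2$ that the one-dimensional analysis of Step 2 naturally gives and the exponent $\ell/2$ in the statement. I expect this is reconciled as follows: for $\mathbf{t}$ in the bulk of $[1,X]^\ell$, $y=F(\mathbf{t})$ is of size exactly $\asymp X^2$, so $T y\asymp Q/q\ge 1$ is large and the tail bounds $(Ty)^{-1}\ll q/Q$ are already better than $(q/Q)^{\ell/2}$ when $\ell\le 2$ — but for $\ell\ge 3$ one genuinely needs to iterate. The right approach is to \emph{not} fix all of $\mathbf{t}$ at once but to perform the $\beta$-integral against the full $\ell$-dimensional integral $\int_{[1,X]^\ell}e(F(\mathbf{t})\beta)\,\mathrm{d}\mathbf{t}$, for which \eqref{eq31} gives the decay $\ll_F |\beta|^{-\ell/2}$ after rescaling $\mathbf{t}=X\mathbf{u}$: one has $\int_{[1,X]^\ell}e(F(\mathbf{t})\beta)\,\mathrm{d}\mathbf{t}=X^\ell\int_{[1/X,1]^\ell}e(X^2\beta(\mathbf{u}^tQ_m\mathbf{u}+\cdots))\,\mathrm{d}\mathbf{u}\ll_F X^\ell\min(1,|X^2\beta|^{-\ell/2})$. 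Then $\int_{|\beta|>Q/(qX^2)}|X^2\beta|^{-\ell/2}\,\mathrm{d}\beta\cdot X^\ell\ll X^{\ell-2}(q/Q)^{\ell/2-1}\cdot X^2(q/Q)\asymp X^\ell(q/Q)^{\ell/2}$, which is exactly the claimed bound (using $\ell\ge 3$ so the integral converges). So the cleaner route for Step 1 is: interchange to do the $\mathbf{t}$-integral first, getting an exponential sum/integral $V(\beta)=\int_{[1,X]^\ell}(\log F(\mathbf{t}))^r$-free version... but the $(\log u)^r$ versus $(\log F(\mathbf{t}))^r$ mismatch forces the hybrid argument. I would therefore combine the two viewpoints: use Step 2's one-dimensional $u$-analysis to replace $(\log u)^r$ by $(\log F(\mathbf{t}))^r$ up to an admissible error, and use \eqref{eq31} together with the tail integral $\int_{|\beta|>Q/(qX^2)}|\beta|^{-\ell/2}\,\mathrm{d}\beta$ to control the $\beta$-truncation error, the convergence of the latter at $\infty$ being precisely where the hypothesis $\ell\ge 3$ (equivalently $\ell/2>1$) is used. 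The remaining care is in making the swap of integration orders rigorous — here I would first insert a harmless smooth cutoff or simply note absolute convergence of $\int\int\int$ over the truncated $\beta$-range, which is finite — and in tracking that all implied constants depend only on $F$ and $k$ (through $r\le k-1$) and all logarithmic losses are absorbed into $X^\varepsilon$.
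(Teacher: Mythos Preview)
Your Step~3 eventually converges to the paper's approach, but the earlier steps are a detour and the decisive computation in Step~3 is not quite right as written. The paper does \emph{not} fix $\mathbf{t}$ and pass to a Dirichlet kernel in $u-F(\mathbf{t})$; instead it keeps the integrand factored as
\[
I_{r,F}(\mu,X)=\Big(\int_{[1/X,1]^{\ell}}e\big(F(\mathbf{t},X)\mu\big)\,\mathrm{d}\mathbf{t}\Big)\Big(\int_{1/X^{2}}^{N_F(X)/X^{2}}(\log(uX^{2}))^{r}e(-u\mu)\,\mathrm{d}u\Big),
\]
after the rescaling $\mu=X^{2}\beta$, $\mathbf{t}\mapsto X\mathbf{t}$, $u\mapsto X^{2}u$. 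The spatial factor is $\ll_F|\mu|^{-\ell/2}$ by \eqref{eq31}, as you note. The point you underweight is that the $u$-factor contributes an \emph{additional} $|\mu|^{-1}$: one integration by parts (equivalently, the substitution $u\mapsto u/|\mu|$ followed by the bound $\int(\log u)^{r}e(-u)\,\mathrm{d}u\ll \log^{k}(|\mu|X)$) gives this. Hence $I_{r,F}(\mu,X)\ll_{k,F}|\mu|^{-(\ell+2)/2+\varepsilon}X^{\varepsilon}$, and the tail
\[
X^{\ell}\int_{|\mu|>Q/q}|\mu|^{-(\ell+2)/2+\varepsilon}\,\mathrm{d}\mu\ll X^{\ell+\varepsilon}(q/Q)^{\ell/2}.
\]
Your displayed arithmetic ``$X^{\ell-2}(q/Q)^{\ell/2-1}\cdot X^{2}(q/Q)$'' is only correct if the second factor comes from this $|\mu|^{-1}$ saving in the $u$-integral; using only the spatial decay $|\mu|^{-\ell/2}$ and the trivial bound $\ll X^{2}\log^{r}X$ on the $u$-integral yields merely $(q/Q)^{\ell/2-1}$, which is insufficient. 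So the gap in your sketch is precisely the explicit use of integration by parts in $u$ \emph{inside} the $\beta$-tail estimate, not as a separate ``replace $(\log u)^{r}$ by $(\log F(\mathbf{t}))^{r}$'' step.

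For the main term, no hybrid is needed: once the $\beta$-range is extended to all of $\mathbb{R}$, the paper evaluates $\int_{\mathbb{R}}I_{r,F}(\mu,X)\,\mathrm{d}\mu$ directly via $\int_{0}^{\infty}\frac{\sin(\alpha x)}{x}\,\mathrm{d}x=\frac{\pi}{2}\operatorname{sign}(\alpha)$, which collapses the $u$-integral onto $u=F(\mathbf{t},X)$ and produces $X^{-\ell}\int_{[1,X]^{\ell}}(\log F(\mathbf{t}))^{r}\,\mathrm{d}\mathbf{t}$ exactly. Your Steps~1--2 (Dirichlet-kernel analysis for fixed $\mathbf{t}$) can be dropped entirely.
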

\begin{proof}
Firstly, we have
\begin{align*}
&\int\limits_{\left|\beta\right|\le \frac{Q}{qX^2}}{\rm d}\beta\int_{[1,X]^{\ell}} e\left(F({\bf t})\beta\right){\rm d}{\bf t}\int_{1}^{N_F(X)}(\log u)^re(-u\beta){\rm d}u\\
&\qquad =X^{\ell}\int\limits_{\left|\beta\right|\le \frac{Q}{q}}{\rm d}\beta\int_{[1/X,1]^{\ell}} e\left(F({\bf t},X)\beta\right){\rm d}{\bf t}\int_{1/X^2}^{N_F(X)/X^2}(\log (X^2u))^re(-u\beta){\rm d}u,
\end{align*}
where $F({\bf t},X)={\bf t}^tQ_{m}{\bf t}+{{\bf b}^t}{\bf t}/{X}+{c}/{X^2}$. If $|\mu|>Q/q\ge 1$, using (\ref{eq31}) then
\begin{align*}
I_{r,F}(\mu,X)&=\int_{1/X^{2}}^{\frac{N_F(X)}{X^2}}(\log(uX^2))^re(-u\mu)\mathrm{d}u\int_{[1/X, 1]^{\ell}}e\left(F({\bf t},X)\mu\right)\mathrm{d}{\bf t}\\
&\ll_F {|\mu|^{-\frac{\ell+2}{2}}}\left|\int_{|\mu|/X^{2}}^{\frac{|\mu| N_F(X)}{X^2}}(|\log u|+\log |\mu|+2\log X)^re(-u)\mathrm{d}u\right|\\
&\ll_{k, F}|\mu|^{-\frac{\ell+2}{2}+\varepsilon}\log^{k} X\sum_{r=0}^k  \left|\int_{|\mu|/X^{2}}^{\frac{|\mu| N_F(X)}{X^2}}(\log u)^re(-u)\mathrm{d}u\right|\\
&\ll_{k, F}|\mu|^{-\frac{\ell+2}{2}+\varepsilon}\log^{k} X \log^k (|\mu|X)\ll_{k,F} |\mu|^{-\frac{\ell+2}{2}+\varepsilon}X^{\varepsilon}.
\end{align*}
The above result implies that
\begin{equation}\label{ierror}
\int_{|\mu|\le Q/q}I_{r,F}(\mu,X){\rm d}\mu=\int_{\mathbb{R}}I_{r,F}(\mu,X){\rm d}\mu+O_{k,F}\left(X^{\varepsilon}(Q/q)^{-\frac{\ell}{2}}\right).
\end{equation}
On the other hand,
\begin{align*}
I_{r,F}(X)&=\int_{\mathbb{R}}\mathrm{d}\mu\int_{1/X^{2}}^{\frac{N_F(X)}{X^2}}(\log(uX^2))^re(-u\mu)\mathrm{d}u\int_{[1/X, 1]^{\ell}}e\left(F({\bf t},X)\mu\right)\mathrm{d}{\bf t}\\
&=2\int_{\mathbb{R}_{\ge 0}}\mathrm{d}\mu\int_{1/X^{2}}^{\frac{N_F(X)}{X^2}}(\log(uX^2))^r\mathrm{d}u\int_{[1/X, 1]^{\ell}}\mathrm{d}{\bf t}\cos\left[2\pi(u-F({\bf t},X))\mu\right]\\
&=\frac{1}{\pi}\int_{[1/X, 1]^{\ell}}\mathrm{d}{\bf t}\int_{\mathbb{R}_{\ge 0}}\mathrm{d}\mu\int_{1/X^{2}}^{\frac{N_F(X)}{X^2}}(\log(uX^2))^r\mathrm{d}\left(\frac{\sin\left[2\pi(u-F({\bf t}, X))\mu\right]}{\mu}\right)\\
&=\frac{1}{\pi}\int_{[1/X, 1]^{\ell}}\mathrm{d}{\bf t}\int_{1/X^{2}}^{\frac{N_F(X)}{X^2}}(\log(uX^2))^r\mathrm{d}\left(\int_{\mathbb{R}_{\ge 0}}\mathrm{d}\mu\frac{\sin\left[2\pi(u-F({\bf t}, X))\mu\right]}{\mu}\right)\\
&=\frac{1}{\pi}\int_{[1/X, 1]^{\ell}}\mathrm{d}{\bf t}\int_{1/X^{2}}^{\frac{N_F(X)}{X^2}}(\log(uX^2))^r\mathrm{d}\left(\frac{\pi}{2}{\rm sign}(u-F({\bf t}, X))\right),
\end{align*}
where we have used the fact: $\int_{0}^{\infty}\frac{\sin(\alpha x)}{x}\mathrm{d}x=\frac{\pi}{2}{\rm sign}(\alpha)$. Note that
\begin{align*}
{\rm sign}(\alpha)=\begin{cases}\frac{\alpha}{\left|\alpha\right|} \qquad &\alpha\neq 0\\
~~0\qquad &\alpha=0,
\end{cases}
\end{align*}
then part integration yields
\begin{align*}
I_{r,F}(X)&=\frac{1}{2}\int_{[1/X, 1]^{\ell}}\mathrm{d}{\bf t}\int_{1/X^{2}}^{\frac{N_F(X)}{X^2}}(\log(uX^2))^r\mathrm{d}\left({\rm sign}(u-F({\bf t}, X))\right)\\
&=\frac{1}{2}\int_{[1/X, 1]^{\ell}}\mathrm{d}{\bf t}\int_{\substack{|u-F({\bf t},X)|\le \varepsilon\\ X^{-2}\le u\le \frac{N_F(X)}{X^2}}}(\log(uX^2))^r\mathrm{d}\left({\rm sign}(u-F({\bf t}, X))\right)\\
&=\frac{1}{2}\int_{[1/X, 1]^{\ell}}\mathrm{d}{\bf t}\left(2(\log(X^2F({\bf t},X)))^r+O_{k,F}(\varepsilon\log^rX)\right)\\
&=\int_{[1/X, 1]^{\ell}}\mathrm{d}{\bf t}(\log(F(X{\bf t})))^r=X^{-\ell}\int_{[1, X]^{\ell}}\mathrm{d}{\bf t}(\log(F({\bf t})))^r.
\end{align*}
Together with (\ref{ierror}) and above, we get the proof the lemma.
\end{proof}
\section{The proof of main theorem}
Here we refer the methods of Pleasants \cite{MR0209241} to deal with the minor arcs. Firstly, let $j\in\nb $ and define
\[
\mathfrak{M}(2^jQ)=\left\{\alpha\in[0,1]:\left|\alpha-a/q\right|\le{2^jQ}/({qX^2}), ~with~ q\le 2^jQ,~a\in \zb_q^*\right\}.
\]
It is obviously that,
$$\mathfrak{M}(2^jQ)\subseteq \mathfrak{M}(2^{j+1}Q)$$
and $\mathfrak{M}(2^jQ)=[0,1]$ when $j>\lfloor(\log (X/Q))/(\log 2)\rfloor:=N$ by well know Dirichlet's approximation theorem. If we define
\begin{align}\label{minor}
\mathcal{F}_{j}(Q)=\mathfrak{M}(2^{j+1}Q)\setminus\mathfrak{M}(2^jQ)
\end{align}
for $j=0,1,2,...,N$, then for all $i\neq j~(i,j=0,1,2,..,N)$ one has
\[
[0,1]=\mathfrak{M}(Q)\cup\left(\bigcup_{0\le j\le N}\mathcal{F}_{j}(Q)\right),\;\;\mathcal{F}_{i}(Q)\cap\mathcal{F}_{j}(Q)=\emptyset \;\mbox{and}\; \mathcal{F}_{i}(Q) \cap \mathfrak{M}(Q)=\emptyset.\]
We take $\mathfrak{M}(Q)$ as the major arcs, and the minor arcs is $\mathfrak{m}(Q)=[0,1]\setminus \mathfrak{M}(Q)$. As we all know, $\mathfrak{M}(2^jQ )$ is the union of all disjoint small intervals $\mathfrak{M}_j (q,a)$ with $1\le q\le 2^jQ$ and $(a,q)=1$, where $\mathfrak{M}_j (q,a)=[{a}/{q}-2^jQ ({qX^2})^{-1},{a}/{q}+2^jQ({qX^2})^{-1}]$. Thus we have
\[
\mathfrak{M}(2^jQ)=\bigcup_{1\le q\le 2^jQ}\bigcup_{a\in\zb_q^*}\mathfrak{M}_j(q, a)
\]
for all $j=0,1,...,N$ and $\mathfrak{m}(Q)=\bigcup_{j=0}^N\mathcal{F}_{j}(Q)$.  Therefore
\begin{align*}
T_{k,F}(X)&=\int_{0}^1I_F(\alpha,X)J_k(-\alpha,N_F(X))\mathrm{d}\alpha\\
&=\left\{\int_{\mathfrak{M}(Q)}+\int_{\mathfrak{m}(Q)}\right\}I_F(\alpha,X)J_k(-\alpha,N_F(X))\mathrm{d}\alpha:=T_{{\frak M},k,F}(X)+T_{{\frak m},k,F}(X).
\end{align*}
Applying the Cauchy-Schwarz inequality give an estimate for the minor arcs integral as follows
\begin{align}\label{eq33}
T_{{\frak m},k,F}(X)&=\int_{\mathfrak{m}(Q)}I_F(\alpha,X)J_k(-\alpha,N_F(X))\mathrm{d}\alpha=\sum_{j=0}^N\int_{\mathcal{F}_j(Q)}I_F(\alpha,X)J_k(-\alpha,N_F(X))\mathrm{d}\alpha\nonumber\\
&\le \sum_{j=0}^N \left(\left|\mathcal{F}_j(Q)\right|^{\frac{1}{2}}\sup_{\alpha\in \mathcal{F}_j(Q)}\left|I_F(\alpha,X)\right|\right)\left(\int_{0}^1\left|J_k(\alpha,N_F(X))\right|^2\mathrm{d}\alpha\right)^{\frac{1}{2}},
\end{align}
where $\left|\mathcal{F}_j(Q)\right|$ is the Lebesgue measure of the set $\mathcal{F}_j(Q)$. By (\ref{minor}) one has
\begin{equation}\label{eq34}
\left|\mathcal{F}_j(Q)\right|\le \left|\mathfrak{M}_{j+1}(Q)\right|\le \sum_{q\le 2^{j+1}Q}\varphi(q)\int_{\left|\lambda\right|\le\frac{2^{j+1}Q}{qX^2}}\mathrm{d}\lambda\le 4(2^jQX^{-1})^2.
\end{equation}
For $j\le N$, notes that $2^jQ\le X$ and  Lemma \ref{t28} one has
\begin{equation}\label{eq35}
\sup_{\alpha\in\mathcal{F}_j(Q)}\left|I_F(\alpha,X)\right|\ll_F X^{\ell}(2^jQ)^{-\frac{\ell}{2}}+X^{\frac{\ell}{2}}\log^{\frac{\ell}{2}}X.
\end{equation}
Hence by (\ref{eq34}), (\ref{eq35}) and $\ell\ge 3$ implies
\begin{align}\label{eq36}
\sum_{j=0}^N \left(\left|\mathcal{F}_j(Q)\right|^{\frac{1}{2}}\sup_{\alpha\in \mathcal{F}_j(Q)}\left|I_F(\alpha,X)\right|\right)&\ll_F\sum_{j=0}^N2^j\frac{Q}{X}\left(X^{\ell}(2^jQ)^{-\frac{\ell}{2}}+X^{\frac{\ell}{2}}\log^{\frac{\ell}{2}}X\right)\nonumber\\
& \ll X^{\ell-1}Q^{-\frac{\ell-2}{2}}+X^{\frac{\ell}{2}}\log^{\frac{\ell}{2}}X.
\end{align}
On the other hand
\[\int_{0}^1\left|J_k(\alpha,N_F(X))\right|^2\mathrm{d}\alpha=\sum_{m\le N_F(X)}\tau_k(m)^2\ll_{k,F} X^{2+\varepsilon},\]
hence together it with (\ref{eq33}) and (\ref{eq36}) we obtain that
\begin{equation}\label{E1}
T_{{\frak m},k,F}(X)\ll_{k,F} \left(Q^{-\frac{\ell-2}{2}}+X^{-\frac{\ell-2}{2}}\right)X^{\ell+\varepsilon}.
\end{equation}
For the major arc, by Lemma \ref{t24} and Lemma \ref{t25}, we have
\begin{align*}
T_{{\frak M},k,F}(X)&=\int_{\frak{M}(Q)}{\rm d}\alpha I_F(\alpha, X)J_{k}(-\alpha, N_F(X))\\
&=\sum_{q\le Q}\sum_{a\in\zb_q^*}\int\limits_{\left|\beta\right|\le \frac{Q}{qX^2}}{\rm d}\beta\left(\frac{S_F(q, a)}{q^{\ell}}\int_{[1,X]^{\ell}} e\left(F({\bf t})\beta\right){\rm d}{\bf t}+O_{F}\left(q(1+|\beta|X^2)X^{\ell-1}\right)\right)\\
&\times\left(\sum_{r=0}^{k-1}\beta_{k,r}(q)\int_{1}^{N_F(X)}(\log u)^re(-u\beta){\rm d}u+O_{k,F}\left(q(1+|\beta|X^2)X^{2-\frac{4}{k+1}+\varepsilon}\right)\right).
\end{align*}
Note that Lemma \ref{t26} and $\beta_{k,r}\ll_k q^{-1+\varepsilon}$, we obtain that
\begin{align*}
T_{{\frak M},k,F}(X)&=\sum_{r=0}^{k-1}\sum_{q\le Q}\beta_{k,r}(q)S_F(q)\int\limits_{\left|\beta\right|\le \frac{Q}{qX^2}}{\rm d}\beta\int_{[1,X]^{\ell}} e\left(F({\bf t})\beta\right){\rm d}{\bf t}\int_{1}^{N_F(X)}(\log u)^re(-u\beta){\rm d}u\nonumber\\
&+O_{k,F}\left(Q^2X^{\ell-\frac{4}{k+1}+\varepsilon}\right)+O_{k,F}\left(Q^2X^{\ell-1+\varepsilon}\right)+O_{k,F}\left(Q^4X^{\ell-1-\frac{4}{k+1}+\varepsilon}\right),
\end{align*}
where
\begin{equation}\label{sfq}
S_F(q)=\sum_{a\in\zb_q^*}q^{-\ell}S_F(q,a).
\end{equation}
On the other hand, by Lemma \ref{t31} we have
\begin{align*}
T_{{\frak M},k,F}(X)&=\sum_{r=0}^{k-1}\left(H_{k,r}(F)\int_{[1,X]^{\ell}}(\log F({\bf t}))^r{\rm d}{\bf t}+\sum_{q>Q}\beta_{k,r}(q)S_F(q)\int_{[1,X]^{\ell}}(\log F({\bf t}))^r{\rm d}{\bf t}\right)\\
&+O_{k,F}\left(X^{\ell+\varepsilon}Q^{-\frac{\ell-2}{2}}+Q^2X^{\ell-\frac{4}{k+1}+\varepsilon}+Q^2X^{\ell-1+\varepsilon}+Q^4X^{\ell-1-\frac{4}{k+1}+\varepsilon}\right)\\
&=\sum_{r=0}^{k-1}H_{k,r}(F)\int_{[1,X]^{\ell}}(\log F({\bf t}))^r{\rm d}{\bf t}\\
&+O_{k,F}\left(X^{\ell+\varepsilon}\left(Q^{-\frac{\ell-2}{2}}+Q^2X^{-\frac{4}{k+1}}+Q^2X^{-1}+Q^4X^{-\frac{k+5}{k+1}}\right)\right),
\end{align*}
where
\begin{equation}\label{ceff}
H_{k,r}(F)=\sum_{q=1}^{\infty}S_F(q)\beta_{k,r}(q).
\end{equation}
It is easily seen that when $Q=X^{\min(1,{4}/{(k+1)})/{(\ell+2)}}$, one has the optimal estimate
\[T_{k,F}(X)=\sum_{r=0}^{k-1}H_{k,r}(F)\int_{[1,X]^{\ell}}(\log F({\bf t}))^r{\rm d}{\bf t}+O_{k,F}\left(X^{\ell-\frac{\ell-2}{\ell+2}\min\left(1,\frac{4}{k+1}\right)+\varepsilon}\right).\]
We define
\[
L(s; k,F)=\sum_{q=1}^{\infty}S_F(q)F_k(q,s),
\]
then combine Lemma {\ref{t24}} and (\ref{ceff}) we obtain that
\[H_{k,r}(F)=\frac{1}{r!}\sum_{t=0}^{k-r-1}\frac{1}{t!}\left({\frac{{\rm d}^tL(s;k,F)}{{\rm d}s^t}}\bigg|_{s=1}\right) {\rm Res}\left((s-1)^{r+t}\zeta(s)^k; s=1\right).\]

We next try to give an explicit expression for $L(s; k,F)$.
\begin{lemma}\label{t41}The function $L(s; k,F)$ has the Euler product as follows
\[L(s; k,F)=\prod_{p}\left(1+\sum_{m=1}^{\infty}S_F(p^m)F_{k}(p^m,s)\right),\]
where
\[S_F(p^m)=p^{-(\ell-1)m}\varrho_F(p^m)-p^{-(\ell-1)(m-1)}\varrho_F(p^{m-1}),\]
\[\varrho_F(n)=\#\{{\bf h}\in (\zb_{n})^{\ell}: F({\bf h})\equiv 0\bmod n\}\]
for $n\in\nb_{\ge 1}$ and
\[F_k(p^m,s)=p^{-ms}\left(\sum_{v=1}^{k-1}(1-p^{-s})^{v-1}\tau_{v}(p^{m-1})+(1-p^{-s})^{k-1}\tau_k(p^{m-1})\frac{p^s-1}{p-1}\right).\]
\end{lemma}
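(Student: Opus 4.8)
The plan is to reduce $L(s;k,F)=\sum_{q\ge 1}S_F(q)F_k(q,s)$ to a prime-by-prime computation. First I would show that $S_F$ is multiplicative. If $q=q_1q_2$ with $(q_1,q_2)=1$ and $a_1,a_2$ are chosen via the Chinese Remainder Theorem so that $a/q\equiv a_1/q_1+a_2/q_2\pmod 1$ with $(a_i,q_i)=1$, then, since $F\in\zb[{\bf x}]$, one has $F({\bf h})\equiv F({\bf h}_i)\bmod q_i$ whenever ${\bf h}\equiv {\bf h}_i\bmod q_i$; hence $S_F(q,a)=S_F(q_1,a_1)S_F(q_2,a_2)$, and letting $a$ run over $\zb_q^*$ (so that $(a_1,a_2)$ runs over $\zb_{q_1}^*\times\zb_{q_2}^*$) and dividing by $q^{\ell}$ gives $S_F(q)=S_F(q_1)S_F(q_2)$. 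Since $F_k(q,s)$ is multiplicative by Proposition \ref{t22}, so is $q\mapsto S_F(q)F_k(q,s)$, and $S_F(1)=F_k(1,s)=1$. Moreover $|S_F(q)|\le q^{-\ell}\varphi(q)\max_{(a,q)=1}|S_F(q,a)|\ll_F q^{1-\ell/2+\varepsilon}$ by Lemma \ref{t26}, while $|F_k(q,s)|\ll q^{-\min({\rm Re}(s),1)+\varepsilon}$ by (\ref{pree}); as $\ell\ge 3$ the series $\sum_q S_F(q)F_k(q,s)$ converges absolutely and uniformly on a neighbourhood of $s=1$. The standard Euler factorisation of an absolutely convergent multiplicative Dirichlet-type series then gives $L(s;k,F)=\prod_p\left(1+\sum_{m\ge 1}S_F(p^m)F_k(p^m,s)\right)$.

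Next I would compute $S_F(p^m)$. Summing over all residues, $\sum_{a\in\zb_q}S_F(q,a)=\sum_{{\bf h}\in(\zb_q)^{\ell}}\sum_{a\in\zb_q}e\left(\frac{a}{q}F({\bf h})\right)=q\,\varrho_F(q)$. Splitting $\sum_{a\in\zb_q}$ according to $e=(a,q)$, writing $a=eb$ with $(b,q/e)=1$, and using $S_F(q,eb)=e^{\ell}S_F(q/e,b)$ (again because $F({\bf h})\bmod(q/e)$ depends only on ${\bf h}\bmod(q/e)$), one obtains $q\,\varrho_F(q)=\sum_{e\mid q}e^{\ell}G(q/e)$ where $G(n)=\sum_{b\in\zb_n^*}S_F(n,b)$. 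Möbius inversion yields $G(q)=q\sum_{d\mid q}\mu(d)d^{\ell-1}\varrho_F(q/d)$, so $S_F(q)=q^{-\ell}G(q)=q^{-(\ell-1)}\sum_{d\mid q}\mu(d)d^{\ell-1}\varrho_F(q/d)$. For $q=p^m$ only $d=1,p$ contribute, which is exactly $S_F(p^m)=p^{-(\ell-1)m}\varrho_F(p^m)-p^{-(\ell-1)(m-1)}\varrho_F(p^{m-1})$. (This also re-proves that $S_F$ is multiplicative, the right-hand side being the Dirichlet convolution of the multiplicative functions $d\mapsto\mu(d)d^{\ell-1}$ and $\varrho_F$ — the latter multiplicative by the Chinese Remainder Theorem — rescaled by the completely multiplicative $q\mapsto q^{-(\ell-1)}$.)

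Finally I would evaluate $F_k(p^m,s)$ directly from $F_k(q,s)=\sum_{\delta\mid q}\mu(q/\delta)f_k(q,\delta,s)$ and the definition (\ref{fkqdef}). For $q=p^m$ only $\delta=p^m$ and $\delta=p^{m-1}$ survive, so $F_k(p^m,s)=f_k(p^m,p^m,s)-f_k(p^m,p^{m-1},s)$. In $f_k(p^m,p^{m-1},s)$ we have $q/\delta=p$, hence $\varphi(q/\delta)=p-1$ and the product over $p'\mid(q/\delta)$ is just the single factor $(1-p^{-s})^k$; moreover, since $\prod_{r=i+1}^{k}d_r$ is always a power of $p$ and $p$ fails the condition $(p,q/\delta)=1$, the inner product is empty, so $f_k(p^m,p^{m-1},s)=\frac{(1-p^{-s})^k}{(p-1)p^{(m-1)s}}\tau_k(p^{m-1})=p^{-ms}\frac{p^s(1-p^{-s})^k}{p-1}\tau_k(p^{m-1})$. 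In $f_k(p^m,p^m,s)$ we have $q/\delta=1$, so $\varphi=1$ and there is no outer product; writing each $d_i=p^{e_i}$ and grouping the factorisations $e_1+\cdots+e_k=m$ by the largest index $j$ with $e_j\ge 1$, the inner product equals $(1-p^{-s})^{j-1}$ and the number of tuples with that largest index equal to $j$ is $\binom{m+j-2}{j-1}=\tau_j(p^{m-1})$; hence $f_k(p^m,p^m,s)=p^{-ms}\sum_{j=1}^{k}(1-p^{-s})^{j-1}\tau_j(p^{m-1})$. Splitting off the $j=k$ term and combining it with $f_k(p^m,p^{m-1},s)$ via $p^s(1-p^{-s})=p^s-1$ then collapses the two expressions into the claimed closed form for $F_k(p^m,s)$.

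The step I expect to require the most care is the combinatorial evaluation of $f_k(p^m,p^m,s)$, namely identifying the $(1-p^{-s})$-weighted count of ordered $k$-term factorisations of $p^m$ with $\sum_{j=1}^{k}(1-p^{-s})^{j-1}\tau_j(p^{m-1})$, together with the final algebraic collapse; in particular the precise shape and sign of the coefficient of $(1-p^{-s})^{k-1}\tau_k(p^{m-1})$ in the closed form should be cross-checked against the small cases $k=1$ and $m=1$, where one computes $F_k(p,s)=1-\frac{p}{p-1}(1-p^{-s})^k$ directly. All the remaining ingredients are routine: the Chinese Remainder Theorem for the multiplicativity, Möbius inversion for $S_F(p^m)$, and the convergence bounds already furnished by Lemma \ref{t26} and (\ref{pree}).
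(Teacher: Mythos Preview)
Your proposal is correct and follows essentially the same route as the paper: multiplicativity of $S_F$ and $F_k(\cdot,s)$ yields the Euler product, $S_F(p^m)$ is obtained by the same M\"obius/orthogonality computation, and $F_k(p^m,s)=f_k(p^m,p^m,s)-f_k(p^m,p^{m-1},s)$ is evaluated termwise. The only difference is cosmetic --- you compute $f_k(p^m,p^m,s)$ by directly grouping the factorisations of $p^m$ according to the largest index $j$ with $d_j>1$, whereas the paper reaches the identical sum $\sum_{v=1}^{k}(1-p^{-s})^{v-1}\tau_v(p^{m-1})$ via a short recursion on $k$; your explicit convergence check (via Lemma~\ref{t26} and \eqref{pree}) is a detail the paper leaves implicit.
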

\begin{proof}
It is easily seen that
\[S_F(q)=q^{-\ell}\sum_{a\in\zb_q^*}\sum_{{\bf h}\in(\zb_q)^{\ell}}e\left(\frac{a}{q}F({\bf h})\right)\]
is real and multiplicative.  When $q=p^m$ is a prime power with integer $m\ge 1$. It is easily seen that
\[
S_F(p^m)=p^{-(\ell-1)m}\varrho_F(p^m)-p^{-(\ell-1)(m-1)}\varrho_F(p^{m-1}).
\]
On the other hand, by Lemma \ref{t22} we shown that $F_k(q,s)$ is also multiplicative. Thus above implies the Euler product of $L(s; k,F)$. Applying Lemma \ref{t21} and Proposition \ref{t22}, we show that
\begin{align*}
F_{k}(p^m,s)&=f_k(p^m,p^m,s)-f_k(p^m,p^{m-1},s)\\
&=p^{-ms}\sum_{d_1...d_{k}=p^m}\prod_{i=1}^{k-1}\prod_{q|(\prod_{r=i+1}^{k}d_r),q~prime}\left(1-\frac{1}{q^s}\right)-\frac{p^s}{\varphi(p)p^{ms}}\left(1-\frac{1}{p^s}\right)^k\tau_k(p^{m-1}).
\end{align*}
For the first term above, denote by
\[
I_k=\sum_{d_1...d_{k}=p^m}\prod_{i=1}^{k-1}\prod_{q|(\prod_{r=i+1}^{k}d_r),q~prime}\left(1-\frac{1}{q^s}\right).\]
Then clearly for $m\ge 1$,
\[I_2=(m+1)(1-p^{-s}),\]
\begin{align*}
I_k&=\sum_{v=0}^{m}\sum_{d_1d_2...d_{k-1}=p^{m-v}}\prod_{i=1}^{k-1}\prod_{q|(p^{v}\prod_{r=i+1}^{k-1}d_r),q~prime}\left(1-\frac{1}{q^s}\right)\\
&=\sum_{d_1d_2...d_{k-1}=p^{m}}\prod_{i=1}^{k-2}\prod_{q|(\prod_{r=i+1}^{k-1}d_r),q~prime}\left(1-\frac{1}{p^s}\right)+\sum_{v=1}^{m}\sum_{d_1d_2...d_{k-1}=p^{m-v}}\left(1-\frac{1}{p^s}\right)^{k-1}\\
&=I_{k-1}+\left(1-p^{-s}\right)^{k-1}\sum_{v=1}^{m}\tau_{k-1}(p^{m-v})=I_{k-1}+\left(1-p^{-s}\right)^{k-1}\left(\tau_k(p^m)-\tau_{k-1}(p^m)\right)\\
&=(m+1)(1-p^{-s})+\sum_{v=3}^{k}\left(1-p^{-s}\right)^{v-1}\tau_{v}(p^{m-1})=\sum_{v=1}^{k}\left(1-p^{-s}\right)^{v-1}\tau_{v}(p^{m-1}).
\end{align*}
Hence
\[F_k(p^m,s)=p^{-ms}\left(\sum_{v=1}^{k-1}(1-p^{-s})^{v-1}\tau_{v}(p^{m-1})+(1-p^{-s})^{k-1}\tau_k(p^{m-1})\frac{p^s-1}{p-1}\right).\]
\end{proof}
Combining above estimates and calculations, we obtain the proof of the main theorem.

\end{document}